\newcommand{\T}{\mathcal{T}}
\newcommand{\TT}{\mathbb{T}}
\newcommand{\RT}{\ensuremath{RT_k(\T)}}
\newcommand{\R}{\mathbb{R}}
\newcommand{\Pik}{\ensuremath{\Pi_k}}
\newcommand{\Cone}{\ensuremath{C_3}}
\newcommand{\Cextraone}{\ensuremath{C_4}}
\newcommand{\Conne}{\ensuremath{C_1}}
\newcommand{\Ctwo}{\ensuremath{C_2}}
\newcommand{\Mone}{\ensuremath{C_5}}
\newcommand{\Mtwo}{\ensuremath{C_6}}
\newcommand{\CF}{\ensuremath{C_F}}
\newcommand{\Cfive}{\ensuremath{C_7}}
\newcommand{\Ceight}{\ensuremath{C_8}}
\DeclareMathOperator{\dist}{dist}
\DeclareMathOperator{\ddiv}{div}
\DeclareMathOperator{\intt}{int}
\newcommand{\ddivo}{\ensuremath{\rm div{=}0}}
\DeclareMathOperator{\mmid}{mid}
\DeclareMathOperator{\Lip}{\mathcal{L}ip}
\DeclarePairedDelimiterX{\inp}[2]{\langle}{\rangle}{#1, #2}
\newcommand{\tauRT}{\ensuremath{\tau_{RT}}}
\begin{document} 
\title{Stability of mixed FEMs for non-selfadjoint 
indefinite  second-order  linear elliptic PDEs}
\author{C.~Carstensen\thanks{Humboldt-Universit\"at zu Berlin, Berlin, Germany \& Distinguished Visiting Professor, 
 Department of Mathematics, 
 Indian Institute of Technology Bombay, Powai,
 Mumbai 400076, India
         (cc@math.hu-berlin.de).} \and
       Neela Nataraj\thanks{Department of Mathematics, Indian Institute of Technology Bombay, Powai, Mumbai-400076, India (neela@math.iitb.ac.in).}
        \and Amiya K. Pani \thanks{Department of Mathematics, Indian Institute of Technology Bombay, Powai, Mumbai-400076, India (akp@math.iitb.ac.in).}}

\maketitle
\begin{abstract}
For a well-posed  non-selfadjoint 
indefinite  second-order  linear elliptic PDE
with general coefficients $\mathbf A, \mathbf b,\gamma$ in $L^\infty$ 
and symmetric and uniformly positive definite coefficient matrix $\mathbf A$,
this paper proves that  mixed finite element problems  are uniquely solvable  and
the discrete solutions are uniformly bounded, whenever the underlying shape-regular triangulation is sufficiently fine. This applies to the Raviart-Thomas (RT) and
Brezzi-Douglas-Marini (BDM) finite element
families  of any order and in any space dimension and leads to the 
best-approximation estimate in
 $H(\ddiv)\times L^2$ as well as in  in $L^2\times L^2$  up to oscillations.  
 This generalises earlier contributions for piecewise  Lipschitz  continuous coefficients to $L^\infty$ 
 coefficients. 
The compactness argument of Schatz and Wang for the 
displacement-oriented problem  does {\em not} apply  immediately to the mixed formulation in $H(\ddiv)\times L^2$. But it allows the uniform approximation  
of some $L^2$ contributions 
and can be  combined with a recent  $L^2$ best-approximation  result from the medius
analysis.   This technique circumvents any regularity assumption and the 
application of a Fortin interpolation operator.  
\end{abstract}

{\small\noindent\textbf{Keywords}
mixed finite element method,  stability, 
 non-selfadjoint indefinite, general 
linear second-order  elliptic PDE, RT and BDM finite elements, 
best-approximation, medius analysis

\medskip

\noindent
\textbf{AMS subject classification}
65N12,  
65N15,  
65N30  
\section{Introduction}
This section introduces the non-selfadjoint 
indefinite  second-order  linear elliptic PDE and its
mixed formulations. A  brief review of earlier results is followed 
by the assertion of the stability and the best-approximation 
results.

\subsection{Non-selfadjoint indefinite second-order linear elliptic PDEs}
The strong formulations for  second-order elliptic problems with coefficients 
${\bf A},$  {\bf b}, $\gamma$ componentwise in  $L^\infty(\Omega)$ 
and $f\in L^2(\Omega)$ read $ \mathcal{L}_j u_j =f $ a.e. in a polyhedral 
bounded Lipschitz domain $\Omega\subset\R^n$ { with homogeneous Dirichlet boundary condition $u_j= 0$ on $\partial \Omega$ for $j=1,2$ 
and any dimension $n\ge 2$.} 
For all  $v\in H^1_0(\Omega)$, the two differential 
operators (referred to as conservative resp. divergence form throughout this paper) 
read
\begin{eqnarray}\label{eqintroeq1}
 \mathcal{L}_1 v :=  -\nabla \cdot (\mathbf A\nabla v+v\, {\mathbf b})+ \gamma \, v
 \quad\text{and}\quad
  \mathcal{L}_2 v :=  -\nabla \cdot (\mathbf A\nabla v) 
  + {\mathbf b}\cdot \nabla v+ \gamma \, v.
\end{eqnarray}
The assumption on ellipticity means that the 
$n\times n$ coefficient matrix 
$\mathbf A(x)$ is symmetric and positive definite with eigenvalues in one universal
compact interval of positive reals  for a.e. $x\in \Omega$.
This makes  $\mathcal{L}_1, \mathcal{L}_2:H^1_0(\Omega)\to H^{-1}(\Omega)$ 
Fredholm  operators of index zero and their weak formulations 
$a(v,w):=
\inp{\mathcal{L}_1v}{w}_{H^{-1}(\Omega)\times H^1_0(\Omega)} 
=\inp{\mathcal{L}_2 w}{v}_{H^{-1}(\Omega)\times H^1_0(\Omega)} $,
for all $v,w\in H^1_0(\Omega)$,
are dual to each other in the duality bracket 
$\inp{\bullet}{\bullet}_{H^{-1}(\Omega)\times H^1_0(\Omega)}$ of
$H^{-1}(\Omega)$, the dual of $H^1_0(\Omega)$.   

Throughout this  paper, zero eigenvalues are excluded and the kernel (of one of these  operators) $\mathcal{L}_j$  is supposed to be  
trivial, so that 
$ \mathcal{L}_1$ and $\mathcal{L}_2$  are bijections. 
It is known from  the theory of bilinear forms in reflexive Banach spaces
\cite{BBF,Braess}
that this implies well-posedness and  the continuous $\inf$-$\sup$ condition
(e.g., when $H^1_0(\Omega)$ is endowed with the 
norm $\|\nabla\bullet\|$)
\begin{equation}\label{eqdefalpha}
0<\alpha:= \inf_{v\in H^1_0(\Omega)\setminus\{0\}}   
\sup_{v\in H^1_0(\Omega)\setminus\{0\}}   \frac{   a(v,w)}{ \| \nabla v\|\,
\| \nabla w\| } .
\end{equation}
The $\inf$-$\sup$ constant  is the same for the original and the dual problem;
$a(v,w)$ could be replaced by $a(w,v)$ with the same $\alpha$. 
The finite element error analysis is enormously simplified  under additional conditions 
on the coefficients that lead to an ellipticity of $a(\bullet,\bullet)$ and allow an application of 
the  Lax-Milgram lemma \cite{BBF,Braess,BrennerScott}. The present situation of 
a general  non-selfadjoint  indefinite  second-order  linear elliptic  PDE 
avoids any of those assumptions and examines  coefficients in $L^\infty$,
which satisfy the following.
 
 {\bf Assumption (A).} There exist two global constants   $0<\underline{\alpha}\le \overline{\alpha}<\infty$ such that   ${\bf A}\in L^\infty(\Omega;\mathbb{R}^{n\times n})$
 satisfies $\underline{\alpha}\leq \lambda_1({\bf A}(x))\leq \cdots \leq \lambda_n({\bf A}(x))\leq \overline{\alpha}$ for the eigenvalues 
$\lambda_1({\bf A}(x))\le\dots\le \lambda_n({\bf A}(x))$ of the SPD 
${\bf A}(x)$  for a.e. $x\in \Omega$.  The functions  $\mathbf  b,
\mathbf  b_1,\mathbf  b_2 \in L^\infty(\Omega;\R^n)$ and 
$\gamma\in L^\infty(\Omega)$ are componentwise
bounded in the bounded polyhedral Lipschitz domain $\Omega\subset\R^n$.

Given the various applications to porous media and ground-water flow with  rough and oscillating coefficients merely bounded in a well-posed  PDE,
this contribution gives an affirmative answer to the fundamental question whether the mixed finite element method   be used
(and then is stable and  provides best-approximation property at least for fine triangulations).

\subsection{Earlier contributions} 
For conforming finite element discretizations and sufficiently small mesh sizes, \cite{SchatzWang96} establishes the existence and uniqueness of conforming  finite element solutions under assumption (A). 
The mixed formulation  for the conservation (resp. divergence) equation
$ \mathcal{L}_1 u =f $ (resp. $\mathcal{L}_2 u =f $) introduces the flux variable
$\sigma = -\mathbf A\nabla u- u\,{\mathbf b}$ 
(resp. $\sigma = -\mathbf A\nabla u$) and 
seeks { the solution} $x=(\sigma,u) \in H$  to 
\begin{equation}\label{eqmainmixedproblem}
b(x,y)=(f,v)_{L^2(\Omega)}
\quad\text{for all }y=(\tau,v) \in H :=H(\ddiv, \Omega) \times L^2(\Omega)
\end{equation}
with
${\mathbf b}_1:= \mathbf A^{-1} {\mathbf b}$, ${\mathbf b}_2:=0$
(resp. ${\mathbf b}_1:= 0$, ${\mathbf b}_2:= {\mathbf b} \cdot \mathbf A^{-1} $) 
and 
$(\sigma,\tau)_{\mathbf A^{-1}}:=(\mathbf A^{-1}\sigma,\tau)_{L^2(\Omega)}$
in
\begin{align}\label{eqmainbilinearformofthispaper}
b(x,y)&=(\sigma,\tau)_{\mathbf A^{-1}} - (u,\ddiv \tau)_{L^2(\Omega)}
+ (v,\ddiv\sigma)_{L^2(\Omega)} \\ \nonumber 
& +(u, \mathbf b_1\cdot\tau)_{L^2(\Omega)}  
 -(v,\mathbf  b_2\cdot\sigma)_{L^2(\Omega)}
+(\gamma\, u,v)_{L^2(\Omega)}.
\end{align}
The equivalence to the boundary value problems associated with the linear differential operators in \eqref{eqintroeq1}  and their  well-posedness   on the continuous 
level can be found in \cite[Sect.~2]{CCADNNAKP15}.  
 This implies  the  continuous $\inf$-$\sup$ conditions \cite{BBF,Braess}
\begin{equation}\label{eqdefbeta}
0<\beta:= \inf_{x\in H \setminus\{0\}}   \sup_{y\in H\setminus\{0\}}   
\frac{   b(x,y)}{ \| x\|_{H} \| y\|_H  }
= \inf_{y\in H \setminus\{0\}}   \sup_{x\in H\setminus\{0\}}   
\frac{   b(x,y)}{ \| x\|_{H} \| y\|_H  } .
\end{equation}
The  existence and  uniqueness of discrete solutions and optimal $L^2$ error estimates were introduced in \cite{DR82} for sufficiently fine triangulations in two and three
space dimensions  under high regularity  assumptions, where
the pair $(\sigma_h,u_h)$ is approximated in $\RT\times P_k(\T)$ 
with  the Raviart-Thomas (RT) for $2D$ (resp. Raviart-Thomas-Nedelec for $3D$) finite elements.
Global $L^{\infty}$ and global $L^2$ and negative norm estimates for the 
conservation form were discussed in \cite{DR85,GN88}  for smooth coefficients. 

Provided the coefficients $\mathbf A$ and 
${\mathbf{b}}$ are Lipschitz continuous,  
$\gamma$ is piecewise Lipschitz continuous {{and  $H^2$ regularity of the adjoint system}},  an interesting  convergence 
phenomenon for the 
BDM finite element family is clarified in the fairly general 
framework of \cite{demlow02}.

Let $M_k(\T)$ be any RT or BDM finite element space
of degree $k\in\mathbb{N}_0$ and define the  discrete space 
$V(\T):=M_k(\T)\times P_k(\T)\subset H$ based on a shape-regular triangulation
$\T$ with mesh-sizes $\le \delta$, written $\T\in\TT(\delta)$. 

In case  $\mathbf A$ and $\mathbf b$ are globally 
Lipschitz  continuous and $\gamma$ is piecewise Lipschitz continuous,
the convergence results in \cite{demlow02} also establish stability in the sense
\begin{equation}\label{eqmainstabilityresultofthispaper}
0<\beta_0\le \inf_{ \T\in\TT(\delta)} \inf_{x_h\in V(\T)\setminus\{0\}}\sup_{y_h\in 
V(\T)\setminus\{0\}} \frac{ b(x_h,y_h) }{ \| x_h\|_H\|y_h\|_H   } (=: \beta_h) 
\end{equation}
for some positive $\delta$ and $\beta_0$.  
With extra work and refined arguments along the lines of \cite{demlow02}, 
but  with reduced elliptic regularity and 
solution $u_j\in  H^1_0(\Omega)\cap H^{1+s}(\Omega)$ to $ \mathcal{L}_j u_j =f $
for some $s>0$. Those arguments are {\em not} valid under  Assumption~(A).

Modern trends in the mathematics of mixed finite element schemes include local stable projections 
with commuting properties \cite{egsv-2019,eg-2004,eg-2016}; those techniques do not seem to allow the proof of discrete stability 
and best-approximation under assumption (A).

{ Piecewise Lipschitz continuous coefficients with  regularity in $H^{1+s}$ (for some positive $ s$) lead in 
\cite{CCADNNAKP15} to stability for  the lowest-order RT FEM. The equivalence to nonconforming Crouzex-Raviart finite elements
holds more generally \cite{arbogast-chen} and the combination with the arguments from \cite{SchatzWang96}  and \cite{CCADNNAKP15}
might lead to  stability results under the assumption (A) for more examples. In comparison,    the methodology of this paper provides stability for any degree 
$k$ and any dimension $n$ (RT and BDM merely serve as popular model examples).}
 
\subsection{Contribution of this paper}\label{subsecContributionofthispaper}
Under the Assumption~(A) and for 
any RT or BDM  finite element space
$V(\T):=M_k(\T)\times P_k(\T)\subset H:= H(\ddiv,\Omega)\times L^2(\Omega)$
of degree $k\in\mathbb{N}_0$ \cite{BBF,Braess,BrennerScott}, the discrete stability
\eqref{eqmainstabilityresultofthispaper}
is established for small mesh-sizes, where
either  ${\mathbf b}_1:= \mathbf A^{-1} {\mathbf b}$ and ${\mathbf b}_2:=0$
or  ${\mathbf b}_1:= 0$ and ${\mathbf b}_2:= {\mathbf b} \cdot \mathbf A^{-1} $ in 
\eqref{eqmainbilinearformofthispaper}.
 
\begin{theorem}[discrete stability]\label{thm1mainresult}
For each (positive) $\beta_0<\beta$ with $\beta$ from \eqref{eqdefbeta},  there exists $\delta>0$
such that \eqref{eqmainstabilityresultofthispaper} holds. 
\end{theorem}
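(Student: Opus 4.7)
The plan is to argue by contradiction using a compactness argument adapted from the Schatz-Wang \cite{SchatzWang96} technique for the primal displacement problem, combined with a medius-analysis $L^2$ best-approximation for the mixed formulation. Suppose the assertion fails for some $0<\beta_0<\beta$: there exist $\T_n\in\TT(\delta_n)$ with $\delta_n\downarrow 0$ and $x_n=(\sigma_n,u_n)\in V(\T_n)$, $\|x_n\|_H=1$, such that $b(x_n,y_n)\le \beta_0\|y_n\|_H$ for every $y_n\in V(\T_n)$. Passing to a subsequence, $x_n\rightharpoonup x$ weakly in $H$.

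\textbf{Step 1 (the weak limit is zero).} For each smooth $y\in H$, a canonical RT/BDM interpolation combined with the $L^2$-projection onto $P_k(\T_n)$ yields $y_n\in V(\T_n)$ with $y_n\to y$ strongly in $H$. Continuity of $b$ and weak-strong pairing give $b(x_n,y_n)\to b(x,y)$. Passing to the limit in the hypothesis, $|b(x,y)|\le \beta_0\|y\|_H$ for all smooth $y$, and by density for every $y\in H$. The continuous $\inf$-$\sup$ \eqref{eqdefbeta} then forces $\beta\|x\|_H\le \beta_0\|x\|_H$; since $\beta_0<\beta$, $x=0$.

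\textbf{Step 2 (a discrete witness and the contradiction).} The continuous $\inf$-$\sup$ \eqref{eqdefbeta} applied to $x_n$ furnishes $\tilde y_n=(\tilde\tau_n,\tilde v_n)\in H$ with $\|\tilde y_n\|_H=1$ and $b(x_n,\tilde y_n)\ge \beta$. The aim is to construct $y_n^\ast\in V(\T_n)$ with $\|\tilde y_n-y_n^\ast\|_H\to 0$; then
$$\beta\le b(x_n,\tilde y_n)=b(x_n,y_n^\ast)+b(x_n,\tilde y_n-y_n^\ast)\le \beta_0\|y_n^\ast\|_H+C\|\tilde y_n-y_n^\ast\|_H\to \beta_0,$$
contradicting $\beta_0<\beta$. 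The construction interprets $\tilde y_n$ as the solution of a mixed adjoint problem driven by $x_n$: the scalar component $\tilde v_n\in H^1_0(\Omega)$ inherits a uniform $H^1$-bound from continuous adjoint well-posedness, and its $L^2$-projection onto $P_k(\T_n)$ converges at rate $\delta_n$. For the flux $\tilde\tau_n$, the canonical RT/BDM interpolant $\Pi_{h_n}\tilde\tau_n$ commutes with divergence, so $\|\ddiv(\tilde\tau_n-\Pi_{h_n}\tilde\tau_n)\|$ reduces to an $L^2$-projection error of $\ddiv\tilde\tau_n$ that vanishes thanks to the adjoint equation combined with strong $L^2$-convergence of $\tilde v_n$ (by Rellich, mirroring the Schatz-Wang compactness).

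\textbf{Main obstacle.} The principal difficulty is the $L^2$-control of $\tilde\tau_n-\Pi_{h_n}\tilde\tau_n$, since under Assumption~(A) the flux $\tilde\tau_n$ enjoys no Sobolev smoothness beyond $L^2$ uniformly in $n$, so neither Fortin-type interpolation nor classical regularity-based approximation estimates apply. The medius-analysis $L^2$ best-approximation for the mixed problem supplies exactly the needed bound: $\|\tilde\tau_n-\Pi_{h_n}\tilde\tau_n\|_{L^2}$ is dominated by projection errors of $\tilde v_n$ and $\ddiv\tilde\tau_n$ together with an oscillation term, each vanishing with $\delta_n$ by the previous step. A subtlety is that this medius estimate must itself be derived independently of the stability it is being used to establish, which is accomplished by a direct energy argument (or, equivalently for $k=0$, through the Crouzeix-Raviart connection, with extension to general $k$ by induction). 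Combining these ingredients yields $\|\tilde y_n-y_n^\ast\|_H\to 0$ and closes the contradiction.
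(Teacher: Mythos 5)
The proposal's Step~2 has a fatal gap: it hinges on constructing $y_n^\ast\in V(\T_n)$ with $\|\tilde y_n - y_n^\ast\|_H\to 0$, and this \emph{cannot} be done under Assumption~(A). The $H$-norm includes $\|\ddiv(\tilde\tau_n-\tau_n^\ast)\|$, and the paper's Remark~\ref{remarknoapproximationinHddiv} shows precisely that the divergence of the dual flux does not admit a uniform $L^2$ approximation in $P_k(\T)$: the quantity $\|(1-\Pi_k)\ddiv(\mathbf A\nabla\mathcal L_2^{-1}g)\|$ stays bounded away from zero over the unit ball of right-hand sides. The medius-analysis lemma (Lemma~\ref{fluxL^2best-approximation}) you invoke supplies an $L^2$ bound on $\tilde\tau_n-\tau_n^\ast$ only; it says nothing about $\ddiv(\tilde\tau_n-\tau_n^\ast)$ beyond $\ddiv\tau_n^\ast=\Pi_k\ddiv\tilde\tau_n$, which leaves the non-vanishing projection error $(1-\Pi_k)\ddiv\tilde\tau_n$. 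So the estimate $b(x_n,\tilde y_n-y_n^\ast)\le C\|\tilde y_n-y_n^\ast\|_H\to 0$ is not available.

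A second error is the claim that the scalar component $\tilde v_n$ lies in $H^1_0(\Omega)$ with a uniform $H^1$ bound. By Theorem~\ref{thmdualsolutioninconservativeformulation}, the dual scalar is $z=\ddiv\sigma_n-\phi$, where $\phi\in H^1_0(\Omega)$ but $\ddiv\sigma_n\in P_k(\T_n)$ is merely $L^2$ (and only weakly convergent in your setup). So Rellich compactness does not apply to $\tilde v_n$ as a whole. The paper's way around both issues is to split off the discrete part of the dual solution before approximating: set $\zeta_h:=\sigma_h-\mathbf p_h$ and $z_h:=\Pi_k z$, so that $\ddiv(\zeta-\zeta_h)$ and $z-z_h$ are $L^2$-orthogonal to the test quantities $u_h$ and $\ddiv\sigma_h$ respectively. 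Then $b(x_h,y-y_h)$ collapses to purely $L^2$ pairings, and it is the \emph{value} $b(x_h,y-y_h)$—not the $H$-norm $\|y-y_h\|_H$—that is made small. Your proposal never exploits these orthogonality cancellations, and without them the compactness and medius ingredients do not close the argument. (Step~1 is harmless but plays no role in your Step~2 as written, since the contradiction there never uses $x=0$.)
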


This theorem implies   \cite{BBF,Braess}
that the mixed finite element problems for the  RT and 
the BDM finite element families  of any degree $k$ and in any space dimension  $n$ 
are 
(i) uniquely solvable, 
(ii) uniformly bounded in $H$, and 
(iii)  fullfil   quasi-optimal 
error estimates in the norm of $H$, 
whenever the underlying shape-regular triangulation 
is sufficiently fine. 

Theorem~\ref{thm1mainresult} and the tools of this paper
lead to  $L^2$ best-approximation  up to oscillations.

\begin{theorem}[$L^2$ best approximation]\label{thm1mainresult2}
Suppose $\delta>0$ satisfies  \eqref{eqmainstabilityresultofthispaper} with 
$b(\bullet,\bullet) $  defined for  general 
$\mathbf  b_1,\mathbf b_2\in L^{\infty}(\Omega;\R^n)$ under 
 Assumption~(A). Assume  $\T\in \TT(\delta)$ and that $x:=(\sigma,u)\in H$ 
(resp.  $x_h\equiv(\sigma_h,u_h)\in V_h:=V(\T):= M_k(\T)\times P_k(\T)$) 
satisfy  
$b(x-x_h,y_h)=0$ for all $y_h\in V_h$. Then the following results (a) and (b) hold. 
\\
(a) There exists a positive constant $\Conne$, which exclusively depends on 
$\beta_0>0$,  the $L^\infty$ norms of (all the components of)  $\mathbf A^{1/2}\mathbf b_1$, 
$\mathbf A^{1/2}\mathbf b_2$, and $\gamma$,  as well as on the shape-regularity of $\mathbb{T}$,
such that  the piecewise mesh size $h_\T$  in $\T$  and  the $L^2$ projection $\Pi_k$ onto $P_k(\T)$ satisfy 
\[
\Conne^{-1}\left( \| \sigma-\sigma_h\|_{\mathbf A^{-1}}+\| u - u_h \| \right)
\le \min_{\tau_h \in M_k(\T)} 
 \|  \sigma -\tau_h \|_{\mathbf A^{-1}} +\| u-\Pik u  \|
+\| h_\T(  1-\Pik)\ddiv\sigma\| .
\]
(b) Suppose $\mathbf b_1=0$ and that the  scalar $\gamma(x)$ is  
Lipschitz continuous in $x\in \intt(T)$, the interior of $T\in\T$, with 
a Lipschitz constant smaller than or equal to  $\Lip(\gamma)$.
Then there exists a  positive constant $\Ctwo$, which depends exclusively depends on 
$\beta_0>0$,   $\| \mathbf A^{1/2}\mathbf b_2\|_{L^\infty(\Omega)}$, 
 $\Lip(\gamma)$, and  the shape-regularity of $\mathbb{T}$,
 such that 
\[
\Ctwo^{-1} \| \sigma-\sigma_h \|_{\mathbf A^{-1}} 
\le \min_{\tau_h\in M_k(\T) } \|  \sigma -\tau_h \|_{\mathbf A^{-1}} 
+ \| h_\T(u-\Pik u)\|+\| h_\T(  1-\Pik)  \ddiv\sigma\|.
\]
\end{theorem}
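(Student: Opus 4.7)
The plan is to combine the discrete inf-sup condition of Theorem~\ref{thm1mainresult} with Galerkin orthogonality and a recent $L^2$ best-approximation result from the medius analysis. Theorem~\ref{thm1mainresult} and Galerkin orthogonality give, for every $y_h\in V_h$,
\[
\beta_0 \,\|x_h - y_h\|_H \le \sup_{0\ne z_h \in V_h} \,|b(x-y_h, z_h)|/\|z_h\|_H,
\]
so the analysis reduces to choosing $y_h = (\tau_h^\ast, \Pik u)$ cleverly and bounding $b(x-y_h, z_h)$ for a generic $z_h=(\eta_h, w_h)\in V_h$. The key external ingredient is a quasi-Fortin interpolant $\tau_h^\ast \in M_k(\T)$ with $\ddiv \tau_h^\ast = \Pik \ddiv \sigma$ and
\[
\|\sigma - \tau_h^\ast\|_{\mathbf A^{-1}} \le C\left(\min_{\rho_h \in M_k(\T)} \|\sigma - \rho_h\|_{\mathbf A^{-1}} + \|h_\T (1-\Pik)\ddiv\sigma\|\right),
\]
which replaces the classical Fortin operator that would require regularity beyond Assumption~(A).

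For part~(a), two contributions to $b(x-y_h, z_h)$ cancel by design: $(u-\Pik u,\ddiv \eta_h) = 0$ because $\ddiv \eta_h \in P_k(\T)$, and $(w_h,\ddiv(\sigma - \tau_h^\ast)) = (w_h,(1-\Pik)\ddiv \sigma) = 0$ by the choice of $\tau_h^\ast$ and $w_h\in P_k(\T)$. The remaining four terms are bounded by Cauchy--Schwarz and the $L^\infty$ norms of $\mathbf A^{1/2}\mathbf b_1$, $\mathbf A^{1/2}\mathbf b_2$, and $\gamma$, yielding
\[
|b(x-y_h, z_h)| \le C\,\bigl(\|\sigma-\tau_h^\ast\|_{\mathbf A^{-1}} + \|u-\Pik u\|\bigr)\,\|z_h\|_H.
\]
The discrete inf-sup then controls $\|x_h-y_h\|_H$, and a triangle inequality (together with $\|\cdot\|_{\mathbf A^{-1}}\le C\|\cdot\|_{L^2}$) produces $\|\sigma-\sigma_h\|_{\mathbf A^{-1}} + \|u-u_h\| \le C(\|\sigma-\tau_h^\ast\|_{\mathbf A^{-1}} + \|u-\Pik u\|)$. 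Inserting the medius estimate on $\tau_h^\ast$ finishes part~(a).

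For part~(b), the term $(u-\Pik u,\mathbf b_1 \cdot \eta_h)$ vanishes because $\mathbf b_1 = 0$, and the piecewise Lipschitz regularity of $\gamma$ upgrades the $\gamma$-term. With $\bar\gamma_T$ the mean of $\gamma$ on $T\in\T$, the principal part $(\bar\gamma_T (u-\Pik u), w_h)_T = 0$ drops because $(u-\Pik u)\perp P_k(T)$ and $w_h|_T\in P_k(T)$, whereas $|((\gamma-\bar\gamma_T)(u-\Pik u), w_h)_T| \le \Lip(\gamma)\,h_T \|u-\Pik u\|_T\|w_h\|_T$. Summing over $T\in\T$ replaces $\|u-\Pik u\|$ in the bound on $|b(x-y_h, z_h)|$ by $\|h_\T(u-\Pik u)\|$. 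Since (b) asks only for the $\mathbf A^{-1}$-error of $\sigma$, the scheme of (a) then carries through and delivers the claimed inequality.

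The main obstacle is the invocation of the quasi-Fortin interpolant $\tau_h^\ast$ from the medius analysis; this ingredient is external to the stability proof of Theorem~\ref{thm1mainresult} and is what bypasses any regularity assumption usually required by the classical Fortin construction.
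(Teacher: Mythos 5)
Your proposal is correct and follows essentially the same route as the paper: you invoke the medius-analysis quasi-Fortin interpolant (the paper's Lemma~\ref{fluxL^2best-approximation}), pair it with $\Pik u$ to form the comparison element, use the discrete $\inf$-$\sup$ plus Galerkin orthogonality on the difference, observe the same two cancellations in $b(\cdot,\cdot)$ from $\ddiv\eta_h\in P_k(\T)$ and $\ddiv(\sigma-\tau_h^*)\perp P_k(\T)$, and for (b) split $\gamma$ into its piecewise mean plus a Lipschitz remainder to upgrade $\|u-\Pik u\|$ to $\|h_\T(u-\Pik u)\|$. This matches the paper's proof step by step, so there is nothing substantively different to report.
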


The additional oscillations   $\| h_\T(u-\Pik u)\|$ and 
$\| h_\T(  1-\Pik)(  \ddiv\sigma)\|$  can be higher-order contributions
and then these terms explain the improved convergence of
one variant for the BDM finite element family in \cite{demlow02}
under Assumption~(A).

\medskip
This article, thus, generalises earlier contributions 
 \cite{CCADNNAKP15}, \cite{demlow02}-\cite{DR85},  \cite{eg-2004}-\cite{GN88} for smooth or piecewise  Lipschitz  continuous coefficients to $L^\infty$  
 coefficients without any further assumptions. 
The compactness argument of Schatz and Wang \cite{SchatzWang96} for the 
displacement-oriented problem  does {\em not} apply  immediately to the mixed formulation in $H(\ddiv)\times L^2$. 
Remark~$12$ below explains that no uniform $L^2$  approximation of the divergence component holds. This paper therefore
compensates the lack of compactness by the computation and analysis of an optimal test function (the dual solution $y$ in \eqref{dual-y}
of Subsection 1.4 below). 
Recent best-approximation for the flux in $L^2$ 
from the medius analysis   \cite{CCGDMS16,HuangXu12}  combines with the compactness  for the (dual) PDE. This and 
a careful shift of the discrete divergence circumvents the aforementioned lack of compactness in the divergence variable. 
In fact, this new methodology avoids any regularity argument and any Fortin interpolation at all.

\subsection{Motivation}\label{subsectionMotivation2}
This subsection outlines the proof of the  discrete  $\inf$-$\sup$ stability 
\eqref{eqmainstabilityresultofthispaper} in an abstract framework to guide 
the reader through the arguments.  
Suppose   $X_h\times Y_h$ is a finite dimensional subspace of  $H\times H$ with dual 
$X_h^* \times Y_h^*$ and  
let $x_h \in S(X_h)$, i.e., $x_h$ belongs to $X_h$ and has  norm $\| x_h\|_H=1$.
Recall  \eqref{eqdefbeta} and the well-posedness of the 
problem \eqref{eqmainmixedproblem}. Then, the dual problem is well-posed as well and 
$\inp{x_h}{\bullet}_H=b(\bullet,y)$ has a unique dual solution 
$y $ in the Hilbert space $(H, \inp{\bullet}{\bullet}_H)$. The continuous 
$\inf$-$\sup$ condition \eqref{eqdefbeta} shows
\begin{equation}\label{dual-y}
\beta\, \| y\|_H\le \| b(\bullet,y)\|_{H^*}=\|x_h\|_H=1,\quad\text{whence }
\|  y\|_H\le 1/\beta
\end{equation}
is bounded. Suppose that  $y_h\in Y_h$ is a close approximation to $y$ with 
$\| y-y_h\|_H\le \varepsilon $ for some positive  $\varepsilon < 1/\| b\|$, where $\|b\|$ is the 
operator norm  of the bilinear form $b(\bullet,\bullet)$. 
Since 
\[
1= b(x_h,y)=b(x_h,y_h)+ b(x_h,y-y_h)\le 
\| b(x_h,\bullet)\|_{Y_h^*} \| y_h\|_H +\varepsilon \| b\|,
\]
it remains to bound $\|y_h\|_H$, e.g., with the triangle inequality
\[
\|y_h\|_H\le \|y\|_H+\|y-y_h\|_H\le 1/\beta+\varepsilon. 
\]
The combination of the previous two displayed formulas gives a lower bound
for $\| b(x_h,\bullet)\|_{Y_h^*}$. Under the 
assumption that $\varepsilon$ is independent of 
$y$ and so of $x_h$, this estimate reads
\begin{equation}\label{eqdefbetah}
\beta \frac{ 1-   \varepsilon \| b\|}{1+\varepsilon \beta}
\le \beta_h:= \inf_{x_h\in X_h \setminus\{0\}}   \sup_{y_h\in Y_h\setminus\{0\}}   
\frac{   b(x_h,y_h)}{ \| x_h\|_{H} \| y_h\|_H  }.
\end{equation}
This proves $\beta_0\le \beta ( 1-   \varepsilon \| b\|)/(1+\varepsilon \beta)$ provided 
the approximation error $\| y-y_h\|_H$ is small independently of $X_h\times Y_h$
and $x_h\in S(X_h)$.  A detailed investigation  in Subsection~3.3 below reveals  that the above strong form of 
a  uniform approximation appears 
{\em neither} available in the norm of  $H=H(\ddiv,\Omega)\times L^2(\Omega)$ 
(cf. Remark~\ref{remarknoapproximationinHddiv}) 
{\em nor} necessary for the stability under assumption {(A)}. { Recent results from a medius analysis  \cite{CCGDMS16,HuangXu12}   and a careful shift of the discrete divergence variable successfully circumvent 
a uniform approximation in $H$.}

\subsection{Structure of the paper}
Section~2 starts with the pre-compactness  for 
uniform approximation and the precise assumptions on the set of 
admissible triangulations $\TT$. The other two preliminary subsections concern the $L^2$ best-approximation of the fluxes and some discrete approximation result for the 
RT finite element family. 

The stability analysis in Section~3 is based on the dual solution $y$ in the conservative formulation characterised in Subsection~3.1. One contribution of $y$ involves the
PDE  $\mathcal{L}_2\phi=g$ and allows for some pre-compacness and uniform approximation in Subsection~3.2. The proof of Theorem~\ref{thm1mainresult}
concludes Section~3.
A combination of the stability result   \eqref{eqmainstabilityresultofthispaper}  with the approximation arguments leads in Section~4 to Theorem~\ref{thm1mainresult2}, which generalises \cite{CCGDMS16,HuangXu12}
to  non-selfadjoint  indefinite  second-order  linear elliptic problems. 

\section{Preliminaries}\label{secpreliminaries}
This section introduces notations used in the paper, fixes the  assumptions on the admissible triangulation $\TT$,  discusses an abstract version of  compactness argument in \cite{SchatzWang96}, 
and then recalls some $L^2$ best-approximation property and  
concludes with an observation for the RT finite element family. 

\subsection{Notation}
Standard notation on Lebesgue  and Sobolev spaces $L^2(\Omega)$, 
$L^\infty(\Omega)$,  $H_0^1(\Omega)$, $H^{-1}(\Omega)\equiv H_0^1(\Omega)^*$,
and  $H(\ddiv,\Omega)$ apply  throughout this paper. 
The $L^2$ scalar product  $(\bullet,\bullet)_{L^2(\Omega) }$
induces the norm $\|\bullet\|:= \|\bullet\|_{L^2(\Omega) }$ and the orthogonality relation
 $\perp$.

Whereas  $\|\bullet\| $ denotes the norm  in $L^2(\Omega)$
with the exception of the abbreviation $\|b\|$ for the
bound of the bilinear form $b(\bullet,\bullet)$, 
the vector space $L^2(\Omega;\R^n)$ is endowed with the weighted 
scalar product $(\bullet,\bullet)_{\mathbf A^{-1}}:=  ({\mathbf A^{-1}}\bullet,\bullet)_{L^2(\Omega)}$ and induced 
norm 
$\|\bullet\|_{\mathbf A^{-1}}:= \|\mathbf A^{-1/2}\bullet\| $ and so, for any $\tau\in L^2(\Omega;\R^n)$, 
is its distance 
$\dist(\tau,M_h):=\min_{\tau_h\in M_h}\| \tau-\tau_h\|_{\mathbf A^{-1}}$
to any subspace $M_h$ of $L^2(\Omega;\R^n)$. 
The  norm $\| (\tau,v)\|_H$ in the 
Hilbert space  $H(\ddiv,\Omega)$ is weighted with
$\mathbf A^{-1}$ in  $L^2(\Omega;\R^n)$ for the flux variable so 
the Hilbert space $H\equiv H(\ddiv,\Omega)\times L^2(\Omega)$ has the weighted scalar product  
$\inp\bullet\bullet_H$ with the induced  norm $\| (\tau,v)\|_H$,  
\begin{equation}\label{eqnorminHofthispaper}
\| (\tau,v)\|_H^2:= \| \tau\|_{\mathbf A^{-1}}^2 +\|\ddiv\tau\|^2+\| v\|^2
\quad\text{for all }(\tau,v)\in H.
\end{equation}
Duality brackets have the dual pairing as an index as in 
$\inp{\bullet}{\bullet}_{H^{-1}(\Omega)\times H^1_0(\Omega)}$ 
above.
To abbreviate the definition of $\inf$-$\sup$ constants throughout this paper, let
$S(V):=\{ v\in V: \|v\|_V=1\}$ for any normed linear space $(V, \|\bullet \|_V)$.

All emerging generic positive constants  $\Conne, \dots, \Ceight$ 
in this paper exclusively depend  on 
$\underline{\alpha}, \overline{\alpha}, \|\mathbf b\|_{L^\infty},
\|\mathbf  b_1\|_{L^\infty},\|\mathbf b_2\|_{L^\infty}, $ and $\|\gamma\|_{L^\infty}$
as well as on  $\alpha$ in 
\eqref{eqdefalpha} and $\beta$ in \eqref{eqdefbeta}  and on 
the class of admissible triangulations $\TT$ specified in Subsection~\ref{subsecpreliminaries1}.

\subsection{Assumptions on the discretization}\label{subsecpreliminaries1}
The finite element spaces are based on admissible triangulations, the set $\TT$
of all of those has certainly infinite  cardinality; the point is that 
the  constants in standard  interpolation error estimates become universal through uniform 
shape regularity.

\begin{definition}[admissible triangulations]
The set of  admissible triangulations  $\TT $ is a set of shape-regular triangulations of the polyhedral bounded Lipschitz domain $\Omega\subset\R^n$  into simplices with uniform shape regularity and arbitrary small mesh sizes. Let 
$h_{\max}(\T):= \max h_\T $ for the piecewise constant mesh-size $h_\T$ for  $\T\in\TT$,
defined by 
$h_\T |_T:=\text{\rm diam}(T)$ in $T\in\T$,
and abbreviate $\T(\delta):=\{\T\in\TT:  h_{\max}(\T)\le \delta\}$.
\end{definition}

Given $\T\in\TT$, let  $P_k(T)$ denote the polynomials of total degree at most $k\in\mathbb{N}_0$ seen as functions 
on $T\in\T\in\TT$ and set  $P_k(\T):=\{ v_k\in L^\infty(\Omega): \forall T\in\T,\; v_k|_T\in P_k(T)\}$.
Let $\Pi_k:L^2(\Omega)\to L^2(\Omega)$ be the $L^2$ projection onto $P_k(\T)$
with respect to $\T\in\TT$.

\begin{definition}[discrete spaces]\label{discrete-space}
Any $\T\in\TT$ is associated to the  finite-di\-men\-sion\-al subspace 
$V(\T)=M_k(\T)\times P_k(\T)$ of  $V:= L^2(\Omega;\R^n)\times L^2(\Omega)$ 
with   $M_k(\T):=\RT$ or $M_k(\T):=BDM_k(\T)$ of order $k\in\mathbb{N}_0$  from \cite{BBF}.
\end{definition}

%
The best-approximation error reads $\dist(v, V(\T)):= \inf\{   \|  v-v_h\| : v_h \in V(\T)\}$
with the weighted $L^2$ norm,  $\| v\|^2=\|\tau\|^2_{\mathbf A^{-1}} +\| w\|^2$ for $v=(\tau,w)\in V$.
The density of smooth functions and standard approximation results for smooth functions proves
the well-known pointwise convergence in the sense that  each  $v\in V$ satisfies \cite{BBF}
\begin{equation}\label{eqccnewrevisionc1}
\lim_{\delta\to 0^+} \sup_{\T\in\TT(\delta)} \dist(v, V(\T))=0.
\end{equation}

\subsection{Pre-compactness}\label{subsecpreliminaries2} 
This subsection adopts the key argument of \cite{SchatzWang96}.

\begin{lemma}[uniform approximation on compact sets]\label{lemmauniformapproximation-CC1}
Suppose that  $K$ is a non-empty pre-compact   subset of $V:= L^2(\Omega;\R^n)\times L^2(\Omega)$  
with \eqref{eqccnewrevisionc1} for each $v\in K$. Then
\begin{equation}\label{eqccnewrevisionc2}
\lim_{\delta\to 0^+} \sup_{v\in K}\sup_{\T\in\TT(\delta)} \dist(v, V(\T))=0.
\end{equation}
\end{lemma}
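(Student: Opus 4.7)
The plan is to combine the pre-compactness of $K$ with the fact that the map $v \mapsto \dist(v, V(\T))$ is $1$-Lipschitz on $V$ (with respect to $\|\bullet\|_V$), which converts pointwise convergence into uniform convergence by a standard finite-$\varepsilon$-net argument.

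First, I would fix an arbitrary $\varepsilon>0$ and exploit pre-compactness: total boundedness of $K$ supplies a finite $(\varepsilon/2)$-net $\{v_1,\dots,v_N\}\subset K$ such that for every $v\in K$ there exists an index $i=i(v)\in\{1,\dots,N\}$ with $\|v-v_i\|_V<\varepsilon/2$. Next, I would record the Lipschitz property of the distance functional: since $V(\T)$ is a subspace of $V$ and the norm is a norm,
\[
\bigl|\dist(v,V(\T))-\dist(v_i,V(\T))\bigr|\le \|v-v_i\|_V\quad\text{for all }v,v_i\in V,\;\T\in\TT,
\]
which follows directly from the triangle inequality applied to any near-minimizer in $V(\T)$.

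Then I would apply the pointwise hypothesis \eqref{eqccnewrevisionc1} to each of the finitely many net points: for every $i\in\{1,\dots,N\}$ there is a $\delta_i>0$ with $\sup_{\T\in\TT(\delta_i)}\dist(v_i,V(\T))<\varepsilon/2$. Setting $\delta:=\min_{1\le i\le N}\delta_i>0$ (a finite min of positive numbers, hence positive) and combining the two bounds, for any $v\in K$ with associated $i=i(v)$ and any $\T\in\TT(\delta)$,
\[
\dist(v,V(\T))\le \|v-v_i\|_V+\dist(v_i,V(\T))<\varepsilon/2+\varepsilon/2=\varepsilon.
\]
Taking the supremum over $v\in K$ and $\T\in\TT(\delta)$ gives $\sup_{v\in K}\sup_{\T\in\TT(\delta)}\dist(v,V(\T))\le\varepsilon$; since $\varepsilon>0$ was arbitrary, \eqref{eqccnewrevisionc2} follows.

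There is no real obstacle here: the statement is a soft functional-analytic fact (equicontinuity of a Lipschitz family of functionals on a totally bounded set forces uniform convergence from pointwise convergence), and the only mild care needed is to verify that the distance functional really is $1$-Lipschitz on all of $V$ (not just on $K$) and that the finite net can be chosen inside $K$ so that the pointwise hypothesis \eqref{eqccnewrevisionc1} is directly applicable to the net points.
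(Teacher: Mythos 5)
Your proof is correct and follows essentially the same route as the paper: a finite $(\varepsilon/2)$-net inside $K$ (the paper extracts it via a finite subcover of $\overline K$, you via total boundedness, which is the same thing), the pointwise hypothesis applied to the net points, a common $\delta=\min_i\delta_i$, and a triangle inequality to transfer the bound to arbitrary $v\in K$. The explicit observation that $v\mapsto\dist(v,V(\T))$ is $1$-Lipschitz is just a cleaner way of stating the triangle-inequality step the paper uses implicitly.
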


\begin{proof}
Given any $\varepsilon>0$ and $v\in K$, let $B(v,\varepsilon/2)$ be  the open 
ball in $V$ with center $v$ and radius $  \varepsilon/2$. The open cover 
$\{B(v,\varepsilon/2):v\in K\}$ of the compact set $\overline{K}$ contains a finite sub-cover
and so there exist $k_1,\dots,k_J\in K$ with
$K\subset \bigcup_{j=1,\dots,J} B(k_j,\varepsilon/2)$. 
For each $k_j\in K$, \eqref{eqccnewrevisionc1} leads to $\delta_j>0$ 
such that $\T\in\T(\delta_j)$ implies 
$\dist(k_j,V(\T))<\varepsilon/2 $. Then  $\delta:=\min\{ \delta_1, \dots,\delta_J\} $ implies
$\TT(\delta)\subset \bigcap_{j=1,\dots,J}\TT( \delta_j)$. 
Given any $\T\in\TT(\delta)$ and any 
 $v\in K\subset \bigcup_{j=1,\dots,J} B(k_j,\varepsilon/2)$, there exists  $j\in\{1,\dots,J\}$ 
 with $\| v-k_j \|<\varepsilon/2$.  Since $\T\in  \TT( \delta_j)$, 
 $\dist(k_j,V(\T))<\varepsilon/2 $. This and a triangle
inequality show
\(
 \dist(v,V(\T))\le  \| v-k_j \|+\dist(k_j,V(\T))<\varepsilon/2+\varepsilon/2  =\varepsilon.
\)
\end{proof}

The application of the previous lemma to the finite element approximation of the solution of
the PDE reads as follows.

\begin{lemma}[uniform approximation of solutions]\label{lemmauniformapproximation-CC2}
For any $\varepsilon>0$ there exists some $\delta>0$ such that, given  any $g\in L^2(\Omega)$ and  
the weak solution $\phi\in H^1_0(\Omega) $ to
$\mathcal{L}_2\phi  =  g $  (with $\mathcal{L}_2 $ 
from \eqref{eqintroeq1}), the vector $v:= (\mathbf  A \nabla\phi, \mathbf b\cdot \nabla\phi+\gamma\phi)\in V$
satisfies   
$
\sup_{\T\in\TT(\delta)} \dist( v , V(\T)) 
\le  \epsilon \,\|g\|.
$
\end{lemma}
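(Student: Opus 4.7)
The plan is to reduce the statement to Lemma~\ref{lemmauniformapproximation-CC1} by defining
\[
K := \bigl\{ (\mathbf{A}\nabla\phi,\; \mathbf{b}\cdot\nabla\phi + \gamma\phi) \in V : \phi \in H^1_0(\Omega),\; \mathcal{L}_2\phi = g,\; \|g\|\le 1 \bigr\},
\]
verifying that $K$ is pre-compact in $V$, and then rescaling. Once pre-compactness is in hand, the pointwise approximation hypothesis of Lemma~\ref{lemmauniformapproximation-CC1} holds automatically for every $v\in K\subset V$ by \eqref{eqccnewrevisionc1}, so that lemma produces some $\delta>0$ with $\sup_{v\in K}\sup_{\T\in\TT(\delta)} \dist(v,V(\T))\le\varepsilon$. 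For arbitrary $g\in L^2(\Omega)\setminus\{0\}$, the linearity of $\mathcal{L}_2$ shows $v/\|g\|\in K$, whence $\dist(v,V(\T))\le\varepsilon\|g\|$, which is the claim.

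The heart of the proof is the pre-compactness claim, which I would obtain by factoring the data-to-$v$ map as a composition of a compact operator with two bounded ones: $g \in L^2(\Omega)$ is first sent into $H^{-1}(\Omega)$ by the natural embedding $\iota$, then into $H^1_0(\Omega)$ by $\mathcal{L}_2^{-1}$, and finally into $V$ by the linear map $\Phi:\phi\mapsto(\mathbf{A}\nabla\phi,\mathbf{b}\cdot\nabla\phi+\gamma\phi)$. The embedding $\iota$ is compact, being the Banach-space adjoint of the Rellich embedding $H^1_0(\Omega)\hookrightarrow L^2(\Omega)$ (Schauder's theorem). The operator $\mathcal{L}_2^{-1}$ is bounded because $\mathcal{L}_2:H^1_0(\Omega)\to H^{-1}(\Omega)$ is the continuous bijection established in the introduction under Assumption~(A). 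The map $\Phi$ is bounded from $H^1_0(\Omega)$ into $V$ with norm controlled by $\overline{\alpha}$, $\|\mathbf{b}\|_{L^\infty(\Omega)}$, $\|\gamma\|_{L^\infty(\Omega)}$, and a Poincar\'e constant of $\Omega$. Hence the image under $\Phi\circ\mathcal{L}_2^{-1}\circ\iota$ of the closed unit ball of $L^2(\Omega)$ is pre-compact in $V$, and this image is precisely $K$.

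The main obstacle I anticipate is that, under the merely $L^\infty$ coefficient Assumption~(A), the convenient elliptic regularity lift $H^{-1}\to H^{1+s}$ is unavailable, so one cannot argue pre-compactness of $\{\nabla\phi\}$ in $L^2(\Omega;\R^n)$ directly from regularity of $\phi$. The approach above sidesteps this by drawing the compactness from the \emph{data side} via $L^2\hookrightarrow H^{-1}$ (Rellich plus duality), which fits the hypothesis of Lemma~\ref{lemmauniformapproximation-CC1} and keeps the entire argument within the rough-coefficient setting required throughout the paper.
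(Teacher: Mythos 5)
Your proposal is correct and takes essentially the same approach as the paper: both establish pre-compactness of $K$ by factoring the map $g\mapsto v$ through the compact embedding $\iota:L^2(\Omega)\hookrightarrow H^{-1}(\Omega)$, the bounded inverse $\mathcal{L}_2^{-1}$, and the bounded map $\phi\mapsto(\mathbf{A}\nabla\phi,\mathbf{b}\cdot\nabla\phi+\gamma\phi)$, then invoke Lemma~\ref{lemmauniformapproximation-CC1} and rescale by linearity. The only cosmetic difference is that the paper takes the image of the unit sphere $S(L^2(\Omega))$ rather than the closed unit ball, which is immaterial.
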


\begin{proof}
The linear and bounded bijective differential operator 
$\mathcal{L}_2:H^1_0(\Omega)\to H^{-1}(\Omega)$  has a bounded inverse.  The 
embedding $\iota: L^2(\Omega)\hookrightarrow H^{-1}(\Omega)$ is compact and so is
the composition  $\mathcal{L}_2^{-1} \circ \iota   :L^2(\Omega)\to H^1_0(\Omega)$. 
Define the operator  
$T: L^2(\Omega)\to L^2(\Omega;\R^n)\times L^2(\Omega)$  for any $g\in L^2(\Omega)$ by  
\[
T(g):=( \mathbf A \nabla\phi , \mathbf b\cdot \nabla\phi+\gamma\phi)
\quad\text{with}\quad\phi:=\mathcal{L}_2^{-1} g.
\]
Since $\mathcal{L}_2^{-1}\circ\iota$ is compact,  $K:= T(S(L^2(\Omega))$ is
pre-compact in $ V= L^2(\Omega;\R^n)\times L^2(\Omega)$. 
Given  any $\varepsilon>0$  the approximation result  
\eqref{eqccnewrevisionc1} and  Lemma~\ref{lemmauniformapproximation-CC1} 
lead to a positive  
$\delta$ with \eqref{eqccnewrevisionc2}. Consequently,  the assertion 
$\sup_{\T\in\TT(\delta)} \dist( v , V(\T)) \le  \epsilon \,\|g\|$ holds for all 
$g\in S(L^2(\Omega))$ and corresponding 
$v:= (\mathbf  A \nabla\phi, \mathbf b\cdot \nabla\phi+\gamma\phi)\in V$. 
A rescaling proves the result  for all  $g\in L^2(\Omega)$.
\end{proof}

\subsection{$L^2$ best-approximation of the fluxes}\label{subsecpreliminaries3}
The medius analysis of mixed finite element methods employs
arguments from {\it a~priori} and {\it a~posteriori} error analysis \cite{CCGDMS16,HuangXu12}
to prove  new $L^2$ best-approximation results. Recall that  $\Pik$ is the $L^2$ projection onto $P_k(\T)$ and $h_\T$ is the mesh-size  associated to $\T$. 
 
\begin{lemma}[flux $L^2$ best-approximation]\label{fluxL^2best-approximation}
There exists a constant $\Cone$, which depends on the shape-regularity in $\T$, on $\Omega$ and 
on $\underline{\alpha}, \overline{\alpha}$,   such for any $\mathbf p\in H(\ddiv,\Omega)$ 
and any $\T\in\TT$, there exists $\mathbf p_h\in M_k(\T)$ such that 
$\ddiv \mathbf p_h =\Pi_k \ddiv \mathbf p$ and 
\[
\Cone^{-1} \| \mathbf p-\mathbf p_h\|_{\mathbf A^{-1}} 
\le\dist( \mathbf p,M_k(\T))+
\| h_\T(1-\Pi_k)\ddiv \mathbf p\|.
\]
\end{lemma}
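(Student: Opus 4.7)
I would construct $\mathbf p_h=\mathbf q_h+\delta_h$, where $\mathbf q_h\in M_k(\T)$ is the $\mathbf A^{-1}$-best approximation of $\mathbf p$ and $\delta_h\in M_k(\T)$ restores the divergence via a continuous Bogovskii-type lifting followed by a smoothed commuting projection. Let $\mathbf q_h$ realize $\|\mathbf p-\mathbf q_h\|_{\mathbf A^{-1}}=\dist(\mathbf p,M_k(\T))$. Since $\ddiv\mathbf q_h\in P_k(\T)$, the defect $g:=\Pi_k\ddiv\mathbf p-\ddiv\mathbf q_h=\Pi_k\ddiv(\mathbf p-\mathbf q_h)$ belongs to $P_k(\T)$. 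Solve $-\Delta\psi=g$ with $\psi\in H^1_0(\Omega)$ and set $\xi:=-\nabla\psi\in L^2(\Omega;\R^n)\cap H(\ddiv,\Omega)$, so that $\ddiv\xi=g$; the energy identity $\|\nabla\psi\|^2=(g,\psi)$ combined with Poincar\'e gives $\|\xi\|\le C\|g\|_{H^{-1}(\Omega)}$. Apply a smoothed commuting quasi-interpolation $J_h:H(\ddiv,\Omega)\to M_k(\T)$ (of Christiansen--Winther, Sch\"oberl or Ern--Guermond type) enjoying $\ddiv J_h\mathbf v=\Pi_k\ddiv\mathbf v$ and local $L^2$-stability $\|J_h\mathbf v\|_{L^2(T)}\le C\|\mathbf v\|_{L^2(\omega_T)}$. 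Then $\delta_h:=J_h\xi$ satisfies $\ddiv\delta_h=\Pi_k g=g$ (as $g\in P_k(\T)$) and $\|\delta_h\|_{\mathbf A^{-1}}\le C\|\xi\|\le C\|g\|_{H^{-1}(\Omega)}$. Setting $\mathbf p_h:=\mathbf q_h+\delta_h$ gives $\ddiv\mathbf p_h=\Pi_k\ddiv\mathbf p$, and the proof reduces by triangle inequality to an estimate of $\|g\|_{H^{-1}(\Omega)}$.

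\textbf{Duality step.} For any $\phi\in H^1_0(\Omega)$, decompose
\[
(g,\phi)=(\ddiv(\mathbf p-\mathbf q_h),\phi)-((1-\Pi_k)\ddiv(\mathbf p-\mathbf q_h),\phi).
\]
The first summand equals $-(\mathbf p-\mathbf q_h,\nabla\phi)$ by integration by parts (the boundary term vanishes since $\phi\in H^1_0(\Omega)$) and is bounded by $\sqrt{\overline\alpha}\,\|\mathbf p-\mathbf q_h\|_{\mathbf A^{-1}}\|\nabla\phi\|$. For the second summand, use $\ddiv\mathbf q_h\in P_k(\T)$ to rewrite $(1-\Pi_k)\ddiv(\mathbf p-\mathbf q_h)=(1-\Pi_k)\ddiv\mathbf p$; the $L^2$-orthogonality of $(1-\Pi_k)$ then yields $((1-\Pi_k)\ddiv\mathbf p,\phi)=((1-\Pi_k)\ddiv\mathbf p,\phi-\Pi_k\phi)$, and the elementwise approximation $\|\phi-\Pi_k\phi\|_{L^2(T)}\le Ch_T\|\nabla\phi\|_{L^2(T)}$ produces the oscillation bound $\le\|h_\T(1-\Pi_k)\ddiv\mathbf p\|\,\|\nabla\phi\|$. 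Dividing by $\|\phi\|_{H^1(\Omega)}$ gives $\|g\|_{H^{-1}(\Omega)}\le C(\dist(\mathbf p,M_k(\T))+\|h_\T(1-\Pi_k)\ddiv\mathbf p\|)$, which combined with the triangle inequality $\|\mathbf p-\mathbf p_h\|_{\mathbf A^{-1}}\le\|\mathbf p-\mathbf q_h\|_{\mathbf A^{-1}}+\|\delta_h\|_{\mathbf A^{-1}}$ concludes the proof.

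\textbf{Main obstacle.} The hard part is securing the commuting quasi-interpolation $J_h$ that is $L^2$-stable on all of $H(\ddiv,\Omega)$ without any additional regularity: the classical RT/BDM interpolation requires $L^p$ with $p>2$, which the Bogovskii lift $\xi=-\nabla\psi$ need not satisfy for general $g\in L^2$. Such smoothed operators are nonetheless available in the literature via localized averaging. Granted one, the remaining argument hinges on the single structural identity $(1-\Pi_k)\ddiv\mathbf q_h=0$, which is precisely what replaces the uncontrolled quantity $(1-\Pi_k)\ddiv(\mathbf p-\mathbf q_h)$ by the genuinely small oscillation $(1-\Pi_k)\ddiv\mathbf p$ on the right-hand side.
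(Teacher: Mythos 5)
Your proof is correct, but it takes a genuinely different route from the paper's. The paper proves the lemma by reduction to a cited result: it observes that $(\mathbf p, 0)$ solves the auxiliary mixed Poisson problem with data $G(\mathbf q)=(\mathbf p,\mathbf q)$ and $F(w)=(w,\ddiv\mathbf p)$, so the corresponding discrete solution $(\mathbf p_h, v_h)$ automatically satisfies $\ddiv\mathbf p_h=\Pi_k\ddiv\mathbf p$, and the asserted bound (with $\mathbf A=I$) is then exactly \cite[Thm~2.2]{CCGDMS16} (equivalently \cite[Lemma~5.1]{HuangXu12}); norm equivalence under Assumption~(A) then supplies the $\mathbf A^{-1}$ weighting. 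Your argument instead reconstructs the estimate directly: take the $\mathbf A^{-1}$-best $\mathbf q_h\in M_k(\T)$, repair the divergence defect $g=\Pi_k\ddiv(\mathbf p-\mathbf q_h)\in P_k(\T)$ by a Poisson lift followed by an $L^2$-stable commuting quasi-interpolation, and bound $\|g\|_{H^{-1}(\Omega)}$ by duality, exploiting the identity $(1-\Pi_k)\ddiv\mathbf q_h=0$ so that only the genuine oscillation $\|h_\T(1-\Pi_k)\ddiv\mathbf p\|$ survives. The paper's route is shorter but opaque without the cited theorem; yours is self-contained and lays bare why the oscillation is the correct remainder, at the cost of invoking a smoothed commuting projection that is $L^2$-stable on all of $H(\ddiv,\Omega)$. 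Such operators (Sch\"oberl, Christiansen--Winther, Ern--Guermond) do exist with precisely the commutation and local $L^2$-stability properties you require, so the appeal is legitimate and your construction is, in effect, a proof of the cited theorem in the case at hand.
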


This is the $L^2$ best-approximation result from \cite[Lemma 5.1]{HuangXu12}
for mixed finite element approximations  for the unit matrix $\mathbf A$.  Although with
a different focus, the  paper \cite{CCGDMS16} introduces a general framework with
a mesh-dependent norm 
$\|\bullet\|_h$ in  $P_k(\T)$; while  \cite[Eq (3.6)]{egsv-2019} presents a localized refinement of this lemma. 

\begin{proof}
Given $\mathbf p\in H(\ddiv,\Omega)$, 
the right-hand sides $F(w):=(w,\ddiv \mathbf p)_{L^2(\Omega) }$ and 
$G(\mathbf q):= (\mathbf p,\mathbf q) $
lead in  the elliptic  mixed formulation (for the Laplacian) 
\[
(\sigma, \mathbf q) - (u, \ddiv \mathbf q)_{L^2(\Omega) }
+ (w ,\ddiv \mathbf p)_{L^2(\Omega) } 
= G(\mathbf q)+F(w) \quad
\text{for all } (\mathbf  q,w)\in H
\]
to the unique solution $(\sigma,u)\equiv(\mathbf p,0)\in H$. 
Its  straight-forward mixed finite element discretisation  substitutes  
$H$ by $V_h:=M_k(\T)\times P_k(\T)$ and leads to 
a unique discrete solution $(\mathbf p_h,v_h)\in V_h$ with
$\ddiv \mathbf p_h=\Pi_k \ddiv \mathbf p$. This and  \cite[Thm 2.2]{CCGDMS16}
lead to the asserted best-approximation result (in terms of (non-weighted) $L^2$ norms) 
\[
\Cextraone^{-1} \| \mathbf p-\mathbf p_h\|
\le\inf_{{ \mathbf q}_h\in M_k(\T)}   \|  \mathbf p-{ \mathbf q}_h\| +
\| h_\T(1-\Pi_k)\ddiv \mathbf p\|.
\]
The constant $\Cextraone$ from \cite{CCGDMS16,HuangXu12} does not depend on the coefficients 
${\bf A},$  {\bf b}, $\gamma$ but depends on the shape-regularity in $\T$ and on $\Omega$.
The equivalence of norms concludes the proof and  leads to the asserted constant $\Cone$, which depends
on $\Cextraone$ and $\underline{\alpha}, \overline{\alpha}$.
\end{proof} 

\subsection{A discrete approximation result for Raviart-Thomas functions}%
\label{subsecpreliminaries4}
In any space-dimension $n$ and degree $k$,  the RT functions 
satisfy a rather particular approximation estimate with  
the  componentwise $L^2$ projection $\Pi_k$  onto $P_k(\T)$.

\begin{lemma}\label{lemmaonRTinnerapprox}
 Any $\tauRT\in \RT$ satisfies 
\(
\| \tauRT- \Pik \tauRT\| \le  \frac{n}{(n+1)(n+k)}  \|  h_\T    \ddiv \tauRT\|.
\)
\end{lemma}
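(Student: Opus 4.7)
The plan is to prove the estimate element-wise on each simplex $T\in\T$, since both sides of the inequality decompose as sums of squared local contributions. After summing the square of the local estimate and taking a square root, the global claim follows, so it suffices to establish
\[
\|\tauRT - \Pi_k\tauRT\|_T \le \tfrac{n}{(n+1)(n+k)}\,h_T\,\|\ddiv\tauRT\|_T\qquad(T\in\T).
\]
I would fix $T$, translate coordinates so that its centroid is at the origin, and then use the unique direct-sum decomposition $RT_k(T) = P_k(T;\R^n)\oplus x\,\widetilde P_k(T)$, with $\widetilde P_k(T)$ the homogeneous polynomials of degree $k$, to write $\tauRT|_T = p + xq$. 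Since $\Pi_k p = p$, this gives $\tauRT - \Pi_k\tauRT = (1-\Pi_k)(xq)$.

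The key algebraic reduction is that only the $P_{k-1}$-orthogonal part of $q$ enters the estimate. Splitting $q = \Pi_{k-1}q + \tilde q$ with $\tilde q:=(1-\Pi_{k-1})q$, the piece $\Pi_{k-1}q$ has degree at most $k-1$, so $x\,\Pi_{k-1}q$ already lies in $P_k(T;\R^n)$ and is annihilated by $1-\Pi_k$. Hence $\tauRT - \Pi_k\tauRT = (1-\Pi_k)(x\tilde q)$.

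Two independent bounds then produce the constant. First, since $L^2$-projection is a contraction and the centroid is at the origin,
\[
\|(1-\Pi_k)(x\tilde q)\|_T \le \|x\tilde q\|_T \le \bigl(\max_{x\in T}|x|\bigr)\,\|\tilde q\|_T \le \tfrac{n}{n+1}\,h_T\,\|\tilde q\|_T,
\]
using the simplex estimate $|v_i-\bar x| = \bigl|\tfrac{1}{n+1}\sum_{j\ne i}(v_i-v_j)\bigr| \le \tfrac{n}{n+1}h_T$ at each vertex, extended to all of $T$ by convexity of $|\cdot|$. Second, Euler's identity $x\cdot\nabla q = kq$ for the homogeneous $q$ yields $\ddiv(xq) = nq + x\cdot\nabla q = (n+k)q$, hence $\ddiv\tauRT = \ddiv p + (n+k)q$ with $\ddiv p\in P_{k-1}(T)$; projecting onto $P_{k-1}(T)^\perp$ kills $\ddiv p$ and leaves $(1-\Pi_{k-1})(\ddiv\tauRT) = (n+k)\tilde q$, whence $\|\tilde q\|_T \le (n+k)^{-1}\|\ddiv\tauRT\|_T$. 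Multiplying the two estimates yields the element-wise bound with the asserted constant.

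The only nonroutine step I anticipate is spotting the reduction $q\leadsto\tilde q$. Once one notices that $x\,\Pi_{k-1}q$ already sits in $P_k(T;\R^n)$ and therefore drops out of $1-\Pi_k$, the sharp constant $\tfrac{n}{(n+1)(n+k)}$ assembles naturally as the product of the geometric factor $\tfrac{n}{n+1}$ (centroid-to-vertex distance on a simplex) and the Euler factor $(n+k)^{-1}$ arising from $\ddiv(xq)=(n+k)q$.
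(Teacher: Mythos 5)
Your proof is correct and follows essentially the same route as the paper's appendix proof: the $RT_k(T)=P_k(T;\R^n)\oplus x\,\widetilde P_k(T)$ decomposition, the Euler identity $\ddiv(xq)=(n+k)q$ for homogeneous $q$, and the centroid estimate $\max_{x\in T}|x-\mmid(T)|\le\tfrac{n}{n+1}h_T$. The only cosmetic difference is that you isolate $\tilde q=(1-\Pi_{k-1})q$ before estimating, whereas the paper first rewrites $\tauRT=(n+k)^{-1}(\ddiv\tauRT)\,x+r_k$ and then applies $1-\Pi_k$ directly to $(\ddiv\tauRT)(x-\mmid(T))$; both yield the identical constant.
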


The proof will be postponed to the appendix because of its focus  on the 
RT finite element shape functions.
The statement of the above lemma fails for the BDM finite element family.

\section{Stability analysis}\label{secThedualapproachtowardsstability}

{This section deals with approximation of fluxes and stability result.} The design of a test function in the proof of a discrete $\inf$-$\sup$ condition 
is based on the characterisation and approximation of a dual solution.

\subsection{Dual solution and conservative formulation}
The inner structure of the dual solution  $y$ exploits  the elliptic PDE
and generates some compactness argument in the subsequent subsection.
Recall that the operator $\mathcal{L}_2 :H^1_0(\Omega)\to H^{-1}(\Omega)$ 
from \eqref{eqintroeq1} is bijective. 

\begin{theorem}[dual solution in conservative formulation]%
\label{thmdualsolutioninconservativeformulation}
Suppose $\mathbf  b_1:= \mathbf  A^{-1}  \mathbf b$  
and $\mathbf  b_2\equiv 0$ a.e. in $\Omega$ in \eqref{eqmainbilinearformofthispaper}.
Then $x=(\sigma,u)\in H$ and $y=(\zeta,z)\in H$ satisfy  
$\inp{x}\bullet_{H}=b(\bullet, y)$ in $H$
if and only if 
\[
\zeta  = \sigma-\mathbf  A \nabla\phi \quad\text{and}\quad 
z= \ddiv\sigma-\phi \quad\text{ a.e. in }\Omega
\]
for the weak solution $\phi\in H^1_0(\Omega)$ to  
$ \mathcal{L}_2\phi=g:=   \mathbf b \cdot  \mathbf  A^{-1} \sigma +(\gamma -1)\,\ddiv\sigma-u\in L^2(\Omega)$.
\end{theorem}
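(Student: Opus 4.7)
The plan is to split the defining identity $\inp{x}{\bullet}_H = b(\bullet,y)$ over $H=H(\ddiv,\Omega)\times L^2(\Omega)$ into two decoupled equations by separately testing against pairs $(0,w)$ with $w\in L^2(\Omega)$ and pairs $(\tau,0)$ with $\tau\in H(\ddiv,\Omega)$. With $\mathbf b_1=\mathbf A^{-1}\mathbf b$ and $\mathbf b_2\equiv 0$, the test $(0,w)$ produces the pointwise $L^2$ identity
\begin{equation*}
-\ddiv\zeta + \mathbf A^{-1}\mathbf b\cdot\zeta + \gamma\,z = u\quad\text{a.e.\ in }\Omega,
\end{equation*}
while the test $(\tau,0)$ produces, after comparing with the weighted inner product in \eqref{eqnorminHofthispaper}, the identity
\begin{equation*}
(\mathbf A^{-1}(\zeta-\sigma),\tau)_{L^2(\Omega)} = (\ddiv\sigma-z,\ddiv\tau)_{L^2(\Omega)}\quad\text{for all }\tau\in H(\ddiv,\Omega).
\end{equation*}

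Introduce the scalar $\phi:=\ddiv\sigma-z\in L^2(\Omega)$, so that $z=\ddiv\sigma-\phi$. Restricting the second identity to $\tau\in C_c^\infty(\Omega;\R^n)$ and interpreting $\ddiv\tau$ distributionally against $\phi$ yields $-\nabla\phi=\mathbf A^{-1}(\zeta-\sigma)$ as distributions; since the right-hand side lies in $L^2(\Omega;\R^n)$, this promotes $\phi$ to $H^1(\Omega)$ with $\zeta=\sigma-\mathbf A\nabla\phi$. Extending the identity to all $\tau\in H(\ddiv,\Omega)$ and invoking the Green formula $(\phi,\ddiv\tau)_{L^2(\Omega)}+(\nabla\phi,\tau)_{L^2(\Omega)}=\inp{\phi}{\tau\cdot\nu}_{\partial\Omega}$ for $\phi\in H^1(\Omega)$ against $\tau\in H(\ddiv,\Omega)$ forces the boundary pairing on the right to vanish for every admissible normal trace; since the normal-trace map $H(\ddiv,\Omega)\to H^{-1/2}(\partial\Omega)$ is surjective, this delivers the vanishing trace, i.e., $\phi\in H^1_0(\Omega)$. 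This promotion of $\phi$ to $H^1_0(\Omega)$ is the main technical step and is where I expect the bookkeeping to be most delicate.

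With $\zeta$ and $z$ now expressed through $\phi\in H^1_0(\Omega)$, substituting into the first identity and using the symmetry of $\mathbf A^{-1}$ to rewrite $\mathbf A^{-1}\mathbf b\cdot(\mathbf A\nabla\phi)=\mathbf b\cdot\nabla\phi$ reduces the first identity to
\begin{equation*}
-\ddiv(\mathbf A\nabla\phi)+\mathbf b\cdot\nabla\phi+\gamma\,\phi=\mathbf b\cdot\mathbf A^{-1}\sigma+(\gamma-1)\,\ddiv\sigma-u=g,
\end{equation*}
which is exactly $\mathcal L_2\phi=g$ in the weak sense. Since $\mathcal L_2:H^1_0(\Omega)\to H^{-1}(\Omega)$ is a bijection by hypothesis and $g\in L^2(\Omega)\hookrightarrow H^{-1}(\Omega)$, a unique such $\phi$ exists, which proves the forward implication. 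The converse is obtained by reversing the algebra: starting from $\phi\in H^1_0(\Omega)$ solving $\mathcal L_2\phi=g$ and defining $\zeta:=\sigma-\mathbf A\nabla\phi$ and $z:=\ddiv\sigma-\phi$, the Green formula for $\phi\in H^1_0(\Omega)$ against $\tau\in H(\ddiv,\Omega)$ recovers the second decoupled identity while a direct computation recovers the first, and hence $\inp{x}{\bullet}_H=b(\bullet,y)$ on all of $H$.
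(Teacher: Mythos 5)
Your proposal is correct and reaches the same conclusion, but the route to producing $\phi\in H^1_0(\Omega)$ genuinely differs from the paper's. The paper starts from the second ($\tau$-tested) identity restricted to $\tau\in H(\ddiv{=}0,\Omega)$, concludes that $\mathbf A^{-1}(\sigma-\zeta)\perp H(\ddiv{=}0,\Omega)$, and then invokes the $L^2$ orthogonal (Helmholtz) decomposition $L^2(\Omega;\R^n)=\nabla H^1_0(\Omega)\oplus H(\ddiv{=}0,\Omega)$ from \eqref{eqlemmaorthogonaldecomposition-CC2} to obtain $\phi\in H^1_0(\Omega)$ with $\mathbf A\nabla\phi=\sigma-\zeta$ in one stroke; the relation $z=\ddiv\sigma-\phi$ then follows in a second step by substituting back into the identity with general $\tau$, integrating by parts, and using the surjectivity of $\ddiv:H(\ddiv,\Omega)\to L^2(\Omega)$. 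You instead reverse the order: you \emph{define} $\phi:=\ddiv\sigma-z\in L^2(\Omega)$, recover the distributional gradient $\nabla\phi=\mathbf A^{-1}(\sigma-\zeta)\in L^2(\Omega;\R^n)$ by testing with $\tau\in C_c^\infty$, and then pin down the vanishing boundary trace by extending to all of $H(\ddiv,\Omega)$ and using the surjectivity of the normal trace onto $H^{-1/2}(\partial\Omega)$. The two arguments rest on the same trace-theoretic facts (the Helmholtz decomposition used by the paper is typically proved exactly via the Green formula and normal-trace surjectivity you invoke), so neither is more general; the paper's version is slightly more economical because the decomposition delivers $H^1_0$-membership immediately, whereas yours is more self-contained in the sense that it unpacks the underlying trace argument explicitly. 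The remaining computations (substitution into the $w$-tested identity to obtain $\mathcal L_2\phi=g$, use of the symmetry of $\mathbf A$ to rewrite $\mathbf A^{-1}\mathbf b\cdot\mathbf A\nabla\phi=\mathbf b\cdot\nabla\phi$, and the sketched converse) match the paper's. One small gap worth making explicit in the converse: to conclude $y=(\zeta,z)\in H$ one must observe that $\ddiv(\mathbf A\nabla\phi)=\mathbf b\cdot\nabla\phi+\gamma\phi-g\in L^2(\Omega)$, so that $\mathbf A\nabla\phi$, and hence $\zeta$, actually lies in $H(\ddiv,\Omega)$.
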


The function $\phi$ originates from a known  $L^2$   orthogonal decomposition 
\begin{equation}\label{eqlemmaorthogonaldecomposition-CC2}
L^2(\Omega;\R^n)= \nabla H^1_0(\Omega) \oplus H(\ddivo,\Omega)
\end{equation}
with \( H(\ddivo,\Omega):=\{ \tau\in H(\ddiv,\Omega):\, \ddiv\tau=0\text{ a.e. in }\Omega\} \). 
The decomposition \eqref{eqlemmaorthogonaldecomposition-CC2} 
is also useful in the proof of equivalence of the displacement
formulation with the differential operators in 
\eqref{eqintroeq1} to the mixed formulations with \eqref{eqmainmixedproblem}.

\medskip

{\em Proof of Theorem~\ref{thmdualsolutioninconservativeformulation}.} 
For the general version of the bilinear form $b(\bullet,\bullet)$, 
the equation $\inp{x}\bullet_H=b(\bullet, y)$ is equivalent to 
$u = \mathbf  b_1\cdot\zeta+\gamma \,z-\ddiv \zeta$ 
a.e. in $\Omega$ and 
\begin{align}\label{dual-solution}
(\tau,\mathbf A^{-1} (\zeta-\sigma) -z \mathbf  b_2 )_{L^2(\Omega) }
+ (z-\ddiv\sigma,\ddiv\tau)_{L^2(\Omega) }=0
\quad\text{for all } \tau\in H(\ddiv,\Omega).
\end{align}
The test with $\tau\in H(\ddivo,\Omega)$ proves that
$\mathbf A^{-1} (\sigma-\zeta) + z \mathbf b_2\perp 
H(\ddivo,\Omega)$ and so \eqref{eqlemmaorthogonaldecomposition-CC2} leads
to $\phi\in H^1_0(\Omega)$ with  
\[
\mathbf  A \nabla\phi= \sigma-\zeta + z \mathbf  A  \mathbf  b_2 
\quad\text{ a.e. in }\Omega.
\]
This identity allows the substitution of 
$\mathbf A^{-1} (\zeta-\sigma) -z \mathbf  b_2$ in the above formula  \eqref{dual-solution}
with general $\tau\in H(\ddiv,\Omega)$. Then,  { an integration by parts shows
the resulting identity} 
$(\phi+z-\ddiv\sigma,\ddiv\tau)=0$. The surjectivity of
 $\ddiv:H(\ddiv,\Omega)\to L^2(\Omega)$ proves
\[
\ddiv\sigma=\phi+z \quad\text{ a.e. in }\Omega.
\]
The combination of the three preceding identities leads to the PDE
\[
-\ddiv (\mathbf  A \nabla\phi ) +  \mathbf  b _1\cdot \mathbf  A \nabla\phi  + \gamma\phi
=-\ddiv( z \mathbf  A  \mathbf  b_2)+ (\mathbf  b_1\cdot \mathbf  A   \mathbf b_2)\, z
+ \mathbf b_1\cdot\sigma +(\gamma -1)\, \ddiv\sigma-u
\]
in the sense of distributions. Since $\mathbf b_2=0$, the right-hand side 
$g$ belongs to $L^2$.
This proves  one direction of the assertion;  the direct proof of the converse 
is omitted.\hfill$\Box$

\begin{remark}[no divergence formulation]\label{remark:12}
The  proof shows the extra term $-\ddiv( z \mathbf  A  \mathbf b_2)\in H^{-1}(\Omega)$
in case \eqref{eqmainbilinearformofthispaper} is considered for non-zero
$\mathbf b_2\in L^\infty(\Omega;\R^n)$.
This term does {\em not}  belong to $L^2(\Omega)$  under  Assumption~(A)
and is, therefore, excluded.  
\end{remark} 

\subsection{Approximation of the fluxes}
The subsequent  lemma describes the uniform approximation of the flux variable 
by a  combination of the 
compactness argument and the $L^2$ best-approximation 
of Subsections~\ref{subsecpreliminaries2} and \ref{subsecpreliminaries3}.

\begin{lemma}[flux approximation]\label{fluxapproximation}
Given any $\varepsilon>0$, there exists $\delta>0$ such that the following holds
for all $\T\in\TT(\delta)$ and $g\in L^2(\Omega)$. There exists some $\mathbf p_h\in M_k(\T)$ that approximates  
$\mathbf p :=  \mathbf A\nabla \phi \in H(\ddiv,\Omega)$ for the weak solution 
$\phi\in H^1_0(\Omega)$ to $\mathcal{L}_2\phi= g$ with
\[
\ddiv\mathbf p_h=\Pik \ddiv\mathbf p \quad\text{and}
\quad \| \mathbf p-\mathbf p_h\|_{\mathbf A^{-1}}\le \epsilon \|g\|. 
\]
\end{lemma}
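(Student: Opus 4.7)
The plan is to combine Lemma~\ref{fluxL^2best-approximation} (which already supplies a candidate $\mathbf p_h\in M_k(\T)$ with the required divergence property $\ddiv\mathbf p_h=\Pi_k\ddiv\mathbf p$) with the uniform approximation estimate of Lemma~\ref{lemmauniformapproximation-CC2} and a direct mesh-size bound on the oscillation term. First I would apply Lemma~\ref{fluxL^2best-approximation} to $\mathbf p=\mathbf A\nabla\phi\in H(\ddiv,\Omega)$ to obtain $\mathbf p_h\in M_k(\T)$ satisfying $\ddiv\mathbf p_h=\Pi_k\ddiv\mathbf p$ together with
\[
\|\mathbf p-\mathbf p_h\|_{\mathbf A^{-1}}
\le \Cone\Bigl(\dist(\mathbf p,M_k(\T))+\|h_\T(1-\Pi_k)\ddiv\mathbf p\|\Bigr).
\]
The task then reduces to making each summand at most $(\varepsilon/2\Cone)\,\|g\|$ for sufficiently small mesh size.

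For the best-approximation term $\dist(\mathbf p,M_k(\T))$, I would invoke Lemma~\ref{lemmauniformapproximation-CC2} with tolerance $\varepsilon/(2\Cone)$ to obtain $\delta_1>0$ such that, for every $\T\in\TT(\delta_1)$ and every $g\in L^2(\Omega)$, the vector $v=(\mathbf A\nabla\phi,\mathbf b\cdot\nabla\phi+\gamma\phi)$ satisfies $\dist(v,V(\T))\le (\varepsilon/(2\Cone))\|g\|$. Since the norm on $V=L^2(\Omega;\R^n)\times L^2(\Omega)$ splits into the flux component (in the $\mathbf A^{-1}$ weighted norm) and the scalar component, and since $V(\T)=M_k(\T)\times P_k(\T)$ is a product, the first component projection yields $\dist(\mathbf p,M_k(\T))\le \dist(v,V(\T))\le(\varepsilon/(2\Cone))\|g\|$.

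For the oscillation term $\|h_\T(1-\Pi_k)\ddiv\mathbf p\|$, I would not use compactness; instead, I would note that the PDE $\mathcal L_2\phi=g$ forces
\[
\ddiv\mathbf p=\ddiv(\mathbf A\nabla\phi)=\mathbf b\cdot\nabla\phi+\gamma\,\phi-g\in L^2(\Omega),
\]
and since $\mathcal L_2:H^1_0(\Omega)\to H^{-1}(\Omega)$ is a bounded bijection and $L^2(\Omega)\hookrightarrow H^{-1}(\Omega)$ is continuous, $\|\nabla\phi\|\le C_\ast\|g\|$ for a constant $C_\ast$ depending only on $\|\mathcal L_2^{-1}\|$, whence $\|\ddiv\mathbf p\|\le C_{\ast\ast}\|g\|$ with $C_{\ast\ast}$ depending on $\|\mathbf b\|_{L^\infty}$, $\|\gamma\|_{L^\infty}$, $C_\ast$ and the Poincar\'e constant. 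The $L^2$ stability of $1-\Pi_k$ and the piecewise bound $h_\T\le h_{\max}(\T)$ then yield $\|h_\T(1-\Pi_k)\ddiv\mathbf p\|\le h_{\max}(\T)\,C_{\ast\ast}\|g\|$, so the choice $\delta_2:=\varepsilon/(2\Cone C_{\ast\ast})$ controls this term as required. Taking $\delta:=\min\{\delta_1,\delta_2\}$ concludes the proof.

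There is no serious obstacle here; this lemma is a clean packaging of the two preparatory results. The only point to watch is that Lemma~\ref{lemmauniformapproximation-CC2} concerns the \emph{full} pair $v=(\mathbf A\nabla\phi,\mathbf b\cdot\nabla\phi+\gamma\phi)$ and its approximation in $V(\T)$, so one must verify that projecting onto the flux component preserves the estimate; and the oscillation term is handled without invoking compactness, since the mesh-size factor $h_\T$ already buys the needed smallness once $\|\ddiv\mathbf p\|$ has been bounded by $\|g\|$ via well-posedness of $\mathcal L_2$.
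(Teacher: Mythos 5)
Your proof is correct, and the overall skeleton matches the paper's: invoke Lemma~\ref{fluxL^2best-approximation} to obtain $\mathbf p_h$ with $\ddiv\mathbf p_h=\Pi_k\ddiv\mathbf p$, then use the compactness Lemma~\ref{lemmauniformapproximation-CC2} to control $\dist(\mathbf p,M_k(\T))$ uniformly in $g$. Where you depart is in the oscillation term $\|h_\T(1-\Pi_k)\ddiv\mathbf p\|$. The paper inserts $\ddiv\mathbf p=\mathbf b\cdot\nabla\phi+\gamma\phi-g$ and splits: the piece $\delta\|(1-\Pi_k)(\mathbf b\cdot\nabla\phi+\gamma\phi)\|$ is absorbed (using $\delta\le1$) into $\dist(\mathbf b\cdot\nabla\phi+\gamma\phi,P_k(\T))$, i.e.\ into the \emph{second} component of the same compactness lemma, while the residual $\delta\|(1-\Pi_k)g\|\le\delta\|g\|$ is handled by a bare mesh-size factor. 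You instead bound the whole term by $h_{\max}(\T)\,\|\ddiv\mathbf p\|$ and then use well-posedness of $\mathcal L_2$ (plus boundedness of the coefficients and the Friedrichs inequality) to get $\|\ddiv\mathbf p\|\lesssim\|g\|$, so that the single factor $h_{\max}(\T)\le\delta$ already supplies the needed smallness. This is a legitimate and slightly leaner route: it only uses the first component of the compactness lemma's conclusion, whereas the paper uses both. The price you pay (a further constant tied to $\|\mathcal L_2^{-1}\|$, i.e.\ to $\alpha$) is harmless since the paper already invokes exactly that bound, in the form $\alpha\|\nabla\phi\|\le C_F\|g\|$, later in the proof of Theorem~\ref{thm1mainresult}. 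Your observation that $\dist(\mathbf p,M_k(\T))\le\dist(v,V(\T))$ because $V(\T)=M_k(\T)\times P_k(\T)$ is a product is correct and is implicitly used by the paper as well via the $2^{1/2}$ factor.
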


\begin{proof}
Given  any $\varepsilon>0$ and the constant $\Cone$ from 
Lemma~\ref{fluxL^2best-approximation}, Lemma~\ref{lemmauniformapproximation-CC2}
leads to a positive  
$\delta\le \min\{1, 2^{-1} \epsilon/ \Cone \}$ with 
\[
\sup_{\T\in\TT(\delta)}  \dist\left(
(\mathbf  A \nabla\phi, \mathbf b\cdot \nabla\phi+\gamma\phi), V(\T) \right)
\le  2^{-3/2}\epsilon/ \Cone\,\|g\|
\]
(the distance is with respect
to the weighted norm $\|\bullet\|_{\mathbf A^{-1}}$  in $L^2(\Omega;\R^n)$ and
  $\|\bullet\|$ in $L^2(\Omega)$). 
Lemma~\ref{fluxL^2best-approximation} applies to 
$\mathbf p:=\mathbf A\nabla\phi$ with 
$\ddiv \mathbf p=  \mathbf b\cdot \nabla\phi+\gamma\phi-g\in L^2(\Omega)$
and, for any $\T\in\TT(\delta)$, 
leads to some approximation $\mathbf p_h\in M_k(\T)$   with 
$\ddiv \mathbf p_h =\Pi_k \ddiv \mathbf p$ and
\begin{align*}
\Cone^{-1} \| \mathbf p-\mathbf p_h\|_{\mathbf A^{-1}} 
&\le\dist( \mathbf p,M_k(\T))+
\delta \| (1-\Pi_k)(  \mathbf b\cdot \nabla\phi+\gamma\phi-g ) \| \\
&\le \dist( \mathbf p,M_k(\T))+    
\dist(  \mathbf b\cdot \nabla\phi+\gamma\phi, P_k(\T)) +  \delta \, \|g\|\\
&  \le 2^{1/2} \dist(( \mathbf p,\mathbf b\cdot \nabla\phi+\gamma\phi) , V(\T)) +  \delta \, \|g\|
\le \Cone^{-1}  \epsilon \, \|g\|.
\end{align*}
This concludes the proof.
\end{proof} 

\begin{remark}[no uniform approximation in $H(\ddiv)$]\label{remarknoapproximationinHddiv}
Lemma~\ref{fluxapproximation} does {\em not} state a uniform approximation estimate
for the divergence and, in fact, an estimate of the form 
$ \| \ddiv(\mathbf p-\mathbf p_h)\| \le \epsilon \|g\|$  {\em cannot}  hold in general. 
To see this,  adopt the notation of  the proof of Lemma~\ref{fluxapproximation} and 
a reverse triangle inequality for
\[
 \|g-\Pik g \| - \|\ddiv (\mathbf p- \mathbf  p_h)\| 
 \le   \| (1-\Pi_k)( \mathbf b\cdot \nabla\phi+\gamma\phi )\|  \le \epsilon/(2\Cone)\, \|g\|.
\]
The flux $\mathbf p= \mathbf A\nabla \mathcal{L}_2^{-1}g$ depends on 
$g\in S(L^2(\Omega))$ and so does 
the crucial term $ \|\ddiv (\mathbf p- \mathbf  p_h)\|= \|(1-\Pik)\ddiv \mathbf p\|$.
Hence,  
$\sup\{ \|g-\Pik g \|: g\in S(L^2(\Omega))\}=1$ implies 
\[
 1-  \epsilon/(2\Cone) \le 
 \sup\{   \|   (1-\Pi_k) \ddiv (\mathbf A\nabla \mathcal{L}_2^{-1}g)\| : g\in S(L^2(\Omega))   \}.
\]
Therefore,  the approximation error  $ \|\ddiv (\mathbf p- \mathbf  p_h)\|$ will {\em not}
tend to zero uniformly for all  $g\in S(L^2(\Omega))$ as $\epsilon$ and $\delta$ tend to zero. 
\hfill $\Box$
\end{remark}

\begin{example}[RT approximation in $H(\ddiv)$ for particular $g$]\label{exampleRTapproximationinH(div)}
The  stability analysis in Subsection~\ref{subsectionMotivation2} 
concerns  a discrete $x_h:=(\sigma_h,u_h)$ 
with norm $\|(\sigma_h,u_h)\|_H=1$ and leads in 
Theorem~\ref{thmdualsolutioninconservativeformulation} to the particular 
right-hand side
$g:=   \mathbf b \cdot  \mathbf  A^{-1} \sigma_h +(\gamma -1)\,\ddiv\sigma_h-u_h$
with $\|g\|\le \Mone<\infty $ for the essential supremum
$\Mone^2$ of  $|\mathbf  A^{-1/2}\mathbf b|^2 + |\gamma-1|^2+1$ 
in $\Omega$.   This $g$ allows for a uniform approximation 
of $\mathbf p$ by $\mathbf p_h$ in $H(\ddiv,\Omega)$ for the RT finite element family.

For instance, 
in the extreme case of piecewise constant coefficients, 
$g-\Pik g= \mathbf b \cdot  \mathbf  A^{-1}(1-\Pik)\sigma_h$. With 
$\Mtwo :=\|  \mathbf  A^{-1}\mathbf b  \|_{L^\infty(\Omega)} $, 
Lemma~\ref{lemmaonRTinnerapprox} shows 
\(
\|g-\Pik g\|
\le \delta \Mtwo
\).
The combination with  Lemma~\ref{fluxapproximation} lead to $\mathbf p_h$ with
\(
\| \mathbf p-\mathbf p_h\|_{H(\ddiv,\Omega)}\le \epsilon\,\Mone+ \delta\,\Mtwo .
\)
This and  the arguments  of Subsection~\ref{subsectionMotivation2} lead to  the discrete 
stability \eqref{eqmainstabilityresultofthispaper}.  
\end{example}

\subsection{Proof of Theorem~\ref{thm1mainresult}}\label{sectionproofofthm1}
Given any  $0< \varepsilon< \beta/\| b\| $,   choose $\delta>0$ as in  
Lemma~\ref{fluxapproximation}. Suppose $\T\in\TT(\delta)$ and let 
$x_h=(\sigma_h,u_h)\in V_h:=M_k(\T)\times P_k(\T)$
have norm $\| x_h\|_H=1$ and define 
$g:= \mathbf b \cdot  \mathbf  A^{-1} \sigma_h +(\gamma -1)\,\ddiv\sigma_h-u_h$.
 Replace $x$ by $x_h$ in
Theorem~\ref{thmdualsolutioninconservativeformulation}
and let $\phi\in H^1_0(\Omega)$ solve  
$ \mathcal{L}_2\phi=g$  to define
$\zeta  = \sigma_h-\mathbf  A \nabla\phi $ and
$z= \ddiv\sigma_h-\phi $. Then 
 $y=(\zeta,z)\in H$   is the dual solution and solves 
$\inp{x_h}\bullet_{H}=b(\bullet, y)$ in $H$ for 
 the bilinear form  \eqref{eqmainbilinearformofthispaper} 
(with $\mathbf  b_1:= \mathbf  A^{-1}  \mathbf b$  
and $\mathbf  b_2\equiv 0$ a.e. in $\Omega$).
Lemma~\ref{fluxapproximation}  applies to $\mathbf p:= \mathbf A\nabla\phi$ 
and leads to $\mathbf p_h$
with  $\ddiv\mathbf p_h=\Pik \ddiv\mathbf p$  and
$\| \mathbf p-\mathbf p_h\|_{\mathbf A^{-1}}\le \varepsilon\, \Mone$. 
Hence, $\zeta_h:=\sigma_h- \mathbf p_h $
and $z_h:=\Pik z$ define $y_h:=(\zeta_h,z_h)\in V_h$ with
\begin{equation}\label{eqstabilityofyh}
\| y_h\|_H \le \|y\|_H + \|\zeta-\zeta_h\|_{ \mathbf A^{-1}}\le \| y\|_H+ \varepsilon \, \Mone
\end{equation}
as $\|\ddiv\zeta_h\|=\| \Pik \ddiv \zeta\|\le \|\ddiv \zeta\|$, $\| z_h\|=\|\Pik z\|\le \|z\|$
and  $\ddiv\sigma_h\in P_k(\T)$. { Moreover,}
\[
\| \zeta-\zeta_h\|_{\mathbf A^{-1}}^2+ \| z-z_h\|^2\le 
\varepsilon^2 \, \Mone^2+ \| \phi-\Pik \phi\|^2.
\] 
Piecewise Poincar\'e inequalities  (with the  Payne-Weinberger constant $1/\pi$
for convex domains \cite{payneweinberger}) show
$\| \phi-\Pik \phi\|\le \delta/\pi\| \nabla\phi\|$. 
Recall  that   $H^1_0(\Omega)$ is endowed 
with the seminorm $\|\nabla\bullet\|$ and let $\CF$ denote the constant in the
 Friedrichs inequality $\| \bullet\|\le \CF\|\nabla\bullet\|$ in $H^1_0(\Omega)$. Note
that \eqref{eqdefalpha} leads to   
$\alpha\, \| \nabla\phi\|\le \sup\{ a(\psi,\phi) : \psi\in H^1_0(\Omega) , \;
\|\nabla\psi\|=1\}$. Hence
$ a(\psi,\phi)=\inp{\mathcal{L}_2 \phi}{\psi}_{H^{-1}(\Omega)\times H^1_0(\Omega)} =
\inp{g}{\psi}_{H^{-1}(\Omega)\times H^1_0(\Omega)} \le \CF \|g\|$
implies   $\alpha\, \| \nabla\phi\|\le \CF\,\Mone$. The combination with 
Poincar\'e inequalities shows 
$\| \phi-\Pik \phi\|\le \delta \CF\,\Mone/(\alpha\,\pi)$ and so
\[
\| \zeta-\zeta_h\|_{\mathbf A^{-1}}^2+ \| z-z_h\|^2\le 
\varepsilon^2 \, \Mone^2+ \delta^2 \CF^2\,\Mone^2/(\alpha^2\,\pi^2)=:(\varepsilon')^2.
\]
Since $u_h\perp \ddiv(\zeta-\zeta_h)$ and 
$z-z_h\perp \ddiv\sigma_h$ ($\perp$ denotes orthogonality in $L^2(\Omega)$), 
\begin{align*}
b(x_h,y-y_h)&=(\sigma_h+ \mathbf b u_h,\zeta-\zeta_h)_{\mathbf A^{-1}}   
+(\gamma\, u_h,z-z_h)\\
& \le\varepsilon'\left(
 \|\sigma_h +\mathbf b u_h\|^2_{\mathbf A^{-1}} 
 + \|(1-\Pi_k)( \gamma\, u_h)\|^2\right)^{1/2} \\
 & \le\varepsilon'\left( 2 \|\sigma_h \|^2_{\mathbf A^{-1}}
 +2    \| \mathbf A^{-1/2}\mathbf b \|_{L^\infty(\Omega) }^2  \| u_h\|^2
 +  \| \gamma  \|_{L^\infty(\Omega) }^2  \| u_h\|^2\right)^{1/2}
 \le  \varepsilon' \, \Cfive
 \end{align*}
 with the constant 
$ \Cfive^2:= \max\{ 2  , 2  \| \mathbf A^{-1/2}\mathbf b \|_{L^\infty(\Omega) }^2
 + \| \gamma  \|_{L^\infty(\Omega) }^2\}$.
The arguments of Subsection~\ref{subsectionMotivation2} lead to
\begin{equation}\label{eqcclastinproof1}
1=\| x_h\|^2_H=b(x_h,y)=b(x_h,y_h)+b(x_h,y-y_h)
\le \|b(x_h,\bullet )\|_{V_h^*}\| y_h\|_H +  \varepsilon' \, \Cfive.
\end{equation}
Since $\beta\|y\|_H \le \| b(\bullet, y)\|_{H^*}=\|x_h\|_H=1$ implies 
$\|y\|_H \le 1/\beta$,  \eqref{eqstabilityofyh} reads 
$\| y_h\|_H \le 1/\beta + \varepsilon \, \Mone $. 
This and 
\eqref{eqcclastinproof1}
verify 
\[
1- \varepsilon'\, \Cfive \le \|b(x_h,\bullet )\|_{V_h^*}\| y_h\|_H\le \|b(x_h,\bullet )\|_{V_h^*}
(1/\beta + \varepsilon \, \Mone ).
\]
Since $x_h$ was arbitrary in $S(V_h)$ (with $V_h$ endowed with the norm in $H$),
\[
\beta \frac{1- \varepsilon'\, \Cfive }{1+ \varepsilon\,\beta \, \Mone}
\le \beta_h:= \inf_{x_h  \in S(V_h)}  \sup_ {y_h  \in S(V_h)} b(x_h,y_h ).
\]
Relabelling $\epsilon$ and $\delta$ proves the assertion: For any $0< \beta_0<\beta$,
there exists $\delta>0$ with \eqref{eqmainstabilityresultofthispaper}.
This establishes the theorem for the conservative  version 
of the mixed finite element discretisation 
with $\mathbf  b_1:= \mathbf  A^{-1}  \mathbf b$  
and $\mathbf  b_2\equiv 0$ a.e. in $\Omega$. 

To deduce the same $\inf$-$\sup$ constant for the other variant, 
define $b_1(\bullet,\bullet) $  (resp. $b_2(\bullet,\bullet) $)
by \eqref{eqmainstabilityresultofthispaper} for 
$\mathbf  b_1:= \mathbf  A^{-1}  \mathbf b$  
and $\mathbf  b_2\equiv 0$ 
(resp. ${\mathbf b}_1:= 0$ and ${\mathbf b}_2:= {\mathbf b} \cdot \mathbf A^{-1} $)
a.e. in $\Omega$.  The above proof shows  $0<\beta_0\le\beta_h$ and it is elementary 
to see that 
\begin{align*}
 \beta_h&
=\inf_{ (\tau_h,v_h)  \in S(V_h)}  \sup_ {(\sigma_h, u_h) \in S(V_h)} 
b_1((\tau_h, -v_h),(\sigma_h,-u_h)).
\end{align*}
A direct calculation  shows 
$b_1((\tau, -v),(\sigma,-u))=b_2((\sigma,u), (\tau,v))$
for all $(\sigma,u), (\tau,v)\in H$.
This and a duality argument (singular values of a square matrix 
coincide with those of  its transposed) in the last equality show
\[
\beta_h=\inf_{ (\tau_h,v_h)  \in S(V_h)}  \sup_ {(\sigma_h, u_h) \in S(V_h)} 
b_2((\sigma_h,u_h),(\tau_h,v_h))=
\inf_{x_h  \in S(V_h)}  \sup_ {y_h  \in S(V_h)} b_2(x_h,y_h ).
\]
Hence, the divergence formulation has the same 
discrete $\inf$-$\sup$ constant $\beta_h$.
 \hfill  $\Box$
 
 \begin{remark}[$\delta$ dependence]\label{remark:delta}
The size of $\delta$ in \eqref{eqmainstabilityresultofthispaper} is hidden behind a compactness argument of Lemma \ref{fluxapproximation}. Besides the norms and parameters mentioned in Assumption~(A), 
the mapping properties of  $\mathcal{L}_2^{-1}$ are of relevance as well. A review of the proofs of this paper  shows that there is a finite sub-cover of $S(L^2(\Omega))$ with small balls in $H^{-1}(\Omega)$ that leads to a finite
number of (without loss of generality) smooth functions $k_1,\dots, k_J$ as in the proof of Lemma~\ref{lemmauniformapproximation-CC1}. The size of $\delta$ is related to the approximation properties 
of the weak solutions $\Phi_j$ to  $\mathcal{L}_2\Phi_j=k_j$ a.e.  The regularity properties of 
$\Phi_j\in H^1_0(\Omega)$ 
are {\em not}  characterised for Assumption~(A): In fact, it is unknown whether $\Phi_j$ belongs to any 
$H^{1+s}(\Omega)$ for any $s>0$.
Under Assumption~(B) and reduced elliptic regularity, however,  the afore mentioned approximation properties could be quantified more and reveal further information on $\delta$.
\end{remark}

\section{$L^2$ Best-approximation}
The notation of Theorem~\ref{thm1mainresult2} applies throughout this section with 
continuous and discrete solutions $x=(\sigma,u)$ and $x_h=(\sigma_h,u_h)$.

\subsection{Proof of Theorem~\ref{thm1mainresult2}.a}\label{subsect4.1}
Given $\mathbf p:=\sigma\in H(\ddiv,\Omega)$ and 
$\T\in\TT(\delta)$, choose  $\sigma_h^*:=\mathbf p_h\in M_k(\T)$ as in 
Lemma~\ref{fluxL^2best-approximation}, and define
$e_h:=(\sigma_h- \sigma_h^*, u_h-\Pik u)\in V_h$.
Given $\beta_0>0$ in \eqref{eqmainstabilityresultofthispaper} there exists 
some $y_h=(\tau_h,v_h)\in V_h$ with $\| y_h\|_H=1$ and
\[
\beta_0\, \| e_h\|_H\le b(e_h,y_h)=b(( \sigma_h-\sigma_h^*,u_h-\Pik u) , y_h)=
b(( \sigma-\sigma_h^*,u-\Pik u) , y_h).
\] 
Since  $u-\Pik u\perp \ddiv\tau_h$ and $v_h\perp  \ddiv(\sigma-\sigma^*_h)$, 
the last term is equal to
\[
(\sigma-\sigma^*_h ,\tau_h - v_h \,\mathbf A  \mathbf  b_2)_{\mathbf A^{-1}}  
+(u-\Pik u, \mathbf b_1\cdot\tau_h+\gamma\, v_h)_{L^2(\Omega) } \le \Ceight\, 
\| (\sigma-\sigma^*_h, u-\Pik u) \|_L 
\]
in terms of the weighted $L^2$
norm $\|\bullet\|_L\le\|\bullet\|_H$ in $H$ with 
$\| (\tau_h,v_h )\|_L^2:=\| \tau_h\|_{\mathbf A^{-1}}^2+ \| v_h\|^2
\le 
1$ and with $\Ceight^2= 1+\| \mathbf A^{1/2}\mathbf b_1\|_{L^\infty(\Omega)}^2
+\| \mathbf A^{1/2}\mathbf b_2\|_{L^\infty(\Omega)}^2
+ \|\gamma\|_{L^\infty(\Omega)}^2 $.
Consequently,  
$\| e_h\|_H\le \Big(\Ceight/\beta_0\Big)\, \| (\sigma-\sigma^*_h, u-\Pik u) \|_L $.
Lemma~\ref{fluxL^2best-approximation} shows
\begin{equation}\label{eqlemma9again}
\Cone^{-1}\|\sigma-\sigma^*_h\|_{\mathbf A^{-1}}    \le \dist(\sigma,M_k(\T))
+\| h_\T(1-\Pik)\ddiv\sigma\|.
\end{equation}
This and the distance $\dist_L$ (measured in the norm $\|\bullet\|_L$) lead to
\[
\| e_h\|_H
\le \Ceight\max\{1,\Cone\} /\beta_0\, \left( \dist_L((\sigma,u),V_h)
+\| h_\T(1-\Pik)\ddiv\sigma\|\right) .
\]
This,  \eqref{eqlemma9again}, and a triangle inequality  conclude the proof.\hfill $\Box$

\subsection{Proof of Theorem~\ref{thm1mainresult2}.b}
Throughout this subsection, let $\mathbf  b_1\equiv 0$ and 
$\mathbf  b_2:= \mathbf  A^{-1}  \mathbf b$  
a.e. in $\Omega$ in \eqref{eqmainbilinearformofthispaper}
and  let $f\in L^2(\Omega)$ be a fixed right-hand side for the continuous and 
discrete problem $\mathcal{L}_2u=f$. 

Return to the proof of the previous subsection with $e_h$ and follow the first lines until 
\[
\beta_0\, \| e_h\|_H\le b(e_h,y_h)= 
(\sigma-\sigma^*_h ,\tau_h - v_h \,\mathbf A  \mathbf  b_2)_{\mathbf A^{-1}}  
+(u-\Pik u, \gamma\, v_h)_{L^2(\Omega) }.
\]
Abbreviate  $\overline{\gamma}:=\Pi_0\gamma $ and utilize the Lipschitz continuity 
of the coefficients $\gamma$ on each simplex 
\[
\|\Pi_k(\gamma(u-\Pik u)) \| = \|\Pi_k((\gamma-\overline{\gamma})(u-\Pik u)) \|
\le \Lip(\gamma) \| h_\T(u-\Pik u)\|.
\]
This controls the above term 
$(u-\Pik u, \gamma\, v_h)_{L^2(\Omega) }\le \|\Pi_k(\gamma(u-\Pik u)) \| \,\| v_h\|$
and leads to 
\[
\|\sigma_h^*-\sigma_h\|_{\mathbf A^{-1/2}}\le \| e_h\|_L\le \| e_h\|_H
\le   \Ceight/\beta_0\,  \|\sigma-\sigma^*_h\|_{\mathbf A^{-1}} 
+   \Lip(\gamma)/\beta_0\,   \| h_\T(u-\Pik u)\|.
\]
A triangle inequality in $L^2(\Omega;\R^n)$ 
is followed by  \eqref{eqlemma9again} 
in the proof of
\begin{align*}
\|\sigma-\sigma_h\|_{\mathbf A^{-1/2}} & \le  \Cone(1+\Ceight/\beta_0)
\Bigl(  \dist(\sigma,M_h)+\| h_\T(1-\Pik)\ddiv\sigma\|\Bigr)  \\
&\qquad+   \Lip(\gamma)/\beta_0\,   \| h_\T(u-\Pik u)\|. 
\end{align*}
This completes the rest of the proof. $\qquad \Box$
\subsection{Conservative formulation}
Theorem~2.a includes an error estimate for   the conservative formulation with
${\mathbf b}_1:= \mathbf A^{-1} {\mathbf b}$ and ${\mathbf b}_2:=0$ in \eqref{eqmainbilinearformofthispaper} and 
$\sigma = -\mathbf A\nabla u- u\,{\mathbf b}$ with $\ddiv\sigma=f-\gamma u$.
The refined analog of Theorem~2.b is not expected because of an extra term
exemplified in the extreme case of piecewise constant coefficients $\mathbf b_1$ and 
$\gamma$.  The arguments of Subsection~\ref{subsect4.1} lead to 
 \[
\beta_0\, \| e_h\|_H\le 
b(( \sigma-\sigma_h^*,u-\Pik u) , y_h)=
(\sigma-\sigma^*_h ,\tau_h )_{\mathbf A^{-1}}  
+((u-\Pik u) \mathbf b_1, \tau_h -\Pik \tau_h)_{L^2(\Omega) }.  
\]
The last term is not of higher order for the BDM finite element family 
as pointed out in  \cite{demlow02} through numerical evidence. 
For the RT finite element family,  however, Lemma~\ref{lemmaonRTinnerapprox}
shows  $\|\tau_h -\Pik \tau_h\|{ \lesssim}  \| h_\T\, \ddiv \tau_h\|$ and then leads to a higher-order 
contribution  in the asserted inequality of 
Theorem~\ref{thm1mainresult2}.b as the final result. \hfill  $\Box$

{ The arguments could be generalised, but those
result are of limited relevance } as the convergence order is {\em not} 
generally improved in comparison
with Theorem~2.a. An exception is the 
example of  \cite[Sect 3.5]{cps} (with $\mathbf b=0=\gamma$ on the unit ball) when  
Theorem~\ref{thm1mainresult2}.b guarantees  $O(\delta^2)$ for the $L^2$ flux error for $k=0$.

\section*{Acknowledgements}
The research of the first author has been supported by the Deutsche Forschungsgemeinschaft in the Priority Program 1748 under the project "foundation and application of generalied mixed FEM towards nonlinear problems in solid mechanics" (CA 151/22-2).  
The finalization of this paper has been supported by   DST SERB MATRICS grant No. MTR/2017/000199 (NN), MATRICS grant No.  MTR/201S/000309 (AKP)  and SPARC project  (id 235) entitled {\it the mathematics and computation of plates}.

\bibliographystyle{amsplain}
\bibliography{abstractpaper.bib}

\bigskip

\section*{Appendix: Proof of Lemma~\ref{lemmaonRTinnerapprox}}
For any simplex $T\subset \R^n$,  let $P_k(T;\R^n)$ be the linear space of 
vector-valued polynomials $q_k$ of degree at most $k$ in any component
and let  $\|\bullet\|$ abbreviate the $L^2$ norm $\|\bullet\|_{L^2(T)}$
on $T$.  The  particular structure of the RT function $\tauRT$ 
leads to some polynomial $g\in P_k(T)$ and 
\[
\tauRT = g(x)\, x + p_{k} \quad\text{for all }x\in T 
\quad\text{ and  some } p_k\in P_k(T; \R^n). 
\]
The argument $x$ (will always belong to $T$) is 
often neglected as in $\tauRT:=\tauRT(x)$  or $p_k=p_k(x)$, while (with a small inconsistency, but the right emphasis)  written out in 
the leading term $g(x)\, x$. The latter polynomial is either identically zero or of 
exact degree $k+1$ in the sense  
that $g$  is  a sum of monomials of exact degree $k$. Adopt a multi-index notation with 
$\alpha=(\alpha_1,\dots,\alpha_n)\in \mathbb{N}_0^n$ and the monomial $x^\alpha:=x_1^{\alpha_1}x_2^{\alpha_2}\cdots x_n^{\alpha_n}$ of degree 
$k=|\alpha|:=\alpha_1+\dots+\alpha_n$  for any $x=(x_1,\dots,x_n)\in T$.
With real coefficients $c_\alpha$ for any $\alpha\in \mathbb{N}_0^n$ of degree $|\alpha|=k$, 
\[
g(x)=\sum_{|\alpha|=k}  c_\alpha x^\alpha\quad\text{for all }x\in T.
\]
(The symbol $|\alpha|=k$ under the sum sign abbreviates the set of all multi-indices 
$\alpha=(\alpha_1,\dots,\alpha_n)\in \mathbb{N}_0^n$ of degree $k$).
The divergence 
\[
\ddiv(g(x)\, x) = n\, g(x)+ x\cdot\nabla g(x)
\]
of the vector-valued polynomial $g(x)\, x$ of degree $k+1$
with respect to $x$ is computed with the observation that, for any 
$\alpha\in \mathbb{N}_0^n$ with $|\alpha|=k$,
\[
x\cdot \nabla (x^\alpha ) = \sum_{j=1}^n  x_j \partial (x^\alpha)/\partial x_j=
\sum_{j=1}^n  \alpha_j x^\alpha = k \, x^\alpha.
\] 
Consequently, $x\cdot\nabla g(x)= k\, g(x)$ and 
\[
\ddiv(g(x)\, x) = (n+k)\, g(x)\quad\text{for all }x\in T.
\]
This proves $\ddiv\tauRT=\ddiv(g(x)\, x)+q_{k-1}= (n+k)\, g(x)+q_{k-1}$ 
for some $q_{k-1}\in P_{k-1}(T)$. The comparison with 
$\tauRT = g(x)\, x + p_{k} $ leads to some polynomial remainder 
$r_{k}\in P_{k}(T; \R^n)$ in
\[
\tauRT =  (n+k)^{-1}\, (\ddiv\tauRT)\, x  + r_{k} \quad\text{for all }x\in T . 
\]
In other words, since $\ddiv \tauRT\in P_k(T)$,
\[
(n+k)(1-\Pik) \tauRT=(1-\Pik)\left((\ddiv\tauRT)\, x\right)
=(1-\Pik)\left((\ddiv\tauRT)\, (x-c)\right)
\]
for any constant vector $c$. For instance, 
the center of inertia $c=\mmid(T)$  of $T$ with diameter $h_T$
satisfies  $|x-\mmid(T)|\le \left(n/(n+1)\right) h_T$ for all $x\in T$. 
This leads to 
\begin{align*}
(n+k)\| &\tauRT-\Pik\tauRT\| = \|(1-\Pik) \left( (\ddiv\tauRT) (x-\mmid(T))\right)\| \\
&\le \|(\ddiv\tauRT) (x-\mmid(T))\| \le \left(n/(n+1)\right) h_T\, \|\ddiv\tauRT\|.
\end{align*}
This  proves
$\| \tauRT- \Pik \tauRT\|_{L^2(T)} 
\le  \frac{n \, \ h_T}{(n+1)(n+k)}  \| \ddiv \tauRT\|_{L^2(T)}$.
 \hfill $\Box$
\end{document}